\documentclass[twoside,11pt,leqno]{article}

\setlength{\oddsidemargin}{-.4cm}
\setlength{\evensidemargin}{-.4cm}
\setlength{\textwidth}{7in}
\setlength{\textheight}{8.2in}

\usepackage{amsthm, amssymb, latexsym, amsmath, amscd, array, graphicx, enumerate, lmodern, slantsc, hyperref,amsfonts}
\usepackage[english]{babel}
\usepackage[all]{xy}
\CompileMatrices
\usepackage{mathtools}

\CompileMatrices

\usepackage{color}
\def\red{} 

\def\RP{\protect\operatorname{\mathbb{R}P}}
\def\pps{\protect\operatorname{P_{\mathbf{m}_s}}}
\def\TC{\protect\operatorname{TC}}

\def\zcl{\protect\operatorname{zcl}}

\def\max{\protect\operatorname{max}}
\def\deg{\protect\operatorname{deg}}

\def\Imm{\protect\operatorname{Imm}}
\def\axial{\protect\operatorname{axial}}
\def\secat{\protect\operatorname{secat}}

\newtheorem{proposition}{Proposition}[section]

\newtheorem{theo}[proposition]{Theorem}
\newtheorem{remark}[proposition]{Remark}

\newtheorem{lemma}[proposition]{Lemma}

\newtheorem{example}[proposition]{Example}
\newtheorem{corollary}[proposition]{Corollary}

\newtheorem{conjecture}[proposition]{Conjecture}


\title{Projective product coverings and sequential motion planning algorithms in real projective spaces}
\author{Jes\'us Gonz\'alez\thanks{Partially supported by Conacyt Research Grant 221221.}, \hspace{.2mm} Darwin Guti\'errez\thanks{This paper is based on part of the Ph.D.~thesis work of the second author at the Mathematics Department of the Escuela Superior de F\'isica y Matem\'aticas del Instituto Polit\'ecnico Nacional.}, \hspace{1mm}and \hspace{.6mm}Adriana Lara\thanks{Supported by Grant SIP20152082.}}

\pagestyle{myheadings}
\markboth{\small{J.~Gonz\'alez, D.~Guti\'errez, and A.~Lara}}
{\small{Higher topological complexity and projective product spaces}}

\date{\empty}

\begin{document}

\maketitle

\begin{abstract}
For positive integers $m$ and $s$, let $\mathbf{m}_s$ stand for the $s$-th tuple $(m,\ldots,m)$. We show that, for large enough $s$, the higher topological complexity $\TC_s$ of an even dimensional real projective space $\RP^m$ is characterized as the smallest positive integer $k=k(m,s)$ for which there is a $(\mathbb{Z}_2)^{s-1}$-equivariant map from Davis' projective product space $\pps$ to the $(k+1)$-th join-power $((\mathbb{Z}_2)^{s-1})^{\ast(k+1)}$. This is a (partial) generalization of Farber-Tabachnikov-Yuzvinsky's work relating $\TC_2$ to the immersion dimension of real projective spaces. In addition, we compute the exact value of $\TC_s(\RP^m)$ for $m$ even and $s$ large enough.
\end{abstract}

\medskip
\noindent{{\it 2010 Mathematics Subject Classification}: 55M30, 55F35, 68T40.}

\noindent{{\it Keywords and phrases:} Projective product space, sectional category, higher topological complexity, zero-divisors cup-length, Milnor's join construction.}


\section{Introduction}\label{secintro}
Michael Farber's notion of topological complexity ($\TC$) was introduced in~\cite{Far,MR2074919} as a way to study the motion planning problem in robotics from a topological perspective. Due to its homotopy invariance, the concept quickly captured the attention of algebraic topologists who began to study the homotopy $\TC$-phenomenology. In particular, Farber's $\TC$ was soon identified as a special instance of a slightly more general concept: Rudyak's higher topological complexity $\TC_s$, which recovers Farber's $\TC$ if $s=2$, can be thought of as a measure of the robustness to noise of motion planning algorithms in automated multitasking processes~(\cite{bgrt,Ru10}). 

\smallskip
Soon after their introduction, the $\TC$-ideas found a highly surprising connection with one of the central problems in last century's main homotopy developments. Namely, it is shown in~\cite{MR1988783} that, for the $m$-dimensional real projective space $\RP^m$, $\TC_2(\RP^m)$ agrees with $\Imm(\RP^m)$, the Euclidean immersion dimension of $\RP^m$, provided $m\neq 1,3,7$. Using the main result in~\cite{MR0336757}, this means that, without restriction on $m$, $\TC_2(\RP^m)$ can be described, in purely homotopic terms, as the minimal positive integer $a(m)$, also denoted by $\axial(\RP^m)$, for which the restriction to $\RP^m\times\RP^m$ of the Hopf multiplication $$\mu\colon\RP^\infty\times\RP^\infty\to\RP^\infty$$ can be compressed to a map $\RP^m\times\RP^m\to\RP^{a(m)}$ ---a so called (optimal) axial map. With this in mind, it is natural to ask for the (geometric and homotopic) properties of $\RP^m$ encoded by the higher analogues $\TC_s(\RP^m)$. Such a task is addressed in this paper and, in doing so, we are naturally lead to Davis' projective product space $\pps$, introduced in~\cite{MR2651360}, and defined as the orbit space of $(S^m)^{\times s}$ by the diagonal (antipodal) $\mathbb{Z}_2$-action ---in Davis' notation, $\mathbf{m}_s$ stands for the $s$-tuple $(m,\ldots,m)$.

\smallskip
In slightly more detail, for $s\geq2$, a natural generalization of the construction in~\cite[(4.2)]{MR1988783} leads to 
\begin{equation}\label{pis}
\TC_s(\RP^m)\geq\secat(\pi_s),
\end{equation}
where $\pi_s\colon\pps\to(\RP^m)^{\times s}$ is the ``pivoted axial'' $(\mathbb{Z}_2)^{\times(s-1)}$-principal bundle whose projection map is induced by the $s$-fold cartesian power of the Hopf double cover $S^m\to\RP^m$ (further details of this construction are reviewed in the next section). The central result in~\cite{MR1988783} asserts that~(\ref{pis}) is an equality for $s=2$. The proof of such a fact is achieved by
\begin{enumerate}[(I)]
\item\label{i1na} connecting $\secat(\pi_2)$ to the existence of (optimal) axial maps $\RP^m\times\RP^m\to\RP^{\secat(\pi_2)}$, and then
\item\label{i2na} showing how (optimal) motion planners for $\RP^m$ are encoded by such axial maps.
\end{enumerate}
It is not difficult to prove the right generalization of~(\ref{i1na}) for $s\geq3$ (see Proposition~\ref{elclasif} below). On the other hand, when $m$ is even, the validness of a suitable statement generalizing~(\ref{i2na}) is hinted both by Proposition~\ref{emp} below and by the cohomological calculations in Section~\ref{seccohpe}. \red{In particular, for $m$ even and $s$ large enough, we prove that equality holds in~(\ref{pis}), and compute the resulting explicit value of $\TC_s(\RP^m)$ ---see Corollary~\ref{partilar} below.} 

\medskip
\red{On the basis of our results, we conjecture that equality always holds in~(\ref{pis}). This would yield a full generalization of Farber-Tabachnikov-Yuzvinsky's result to the higher TC realm. Proving equality in~(\ref{pis}) seems to be inherently more complex when $s\geq3$. See Remarks~\ref{recoverFTY}--\ref{ylaotra} for a discussion of why proving equality in~(\ref{pis}) is elementary for $s=2$, while the corresponding task for $s\geq3$ becomes interestingly more intricate.}

\section{The projective product covering}
\label{sectionpreliminaries}
For an integer $s\geq2$, the \emph{$s$-th higher topological complexity} of a path connected space $X$, $\TC_{s}(X)$, is defined in~\cite{Ru10} as the reduced Schwarz genus of the fibration $$e_s=e^X_{s}:X^{[0,1]}\to X^{s},\qquad e_s(\gamma)=\left(\gamma\left(\frac{0}{s-1}\right),\gamma\left(\frac{1}{s-1}\right),\ldots,\gamma\left(\frac{s-1}{s-1}\right)\right).$$ Thus $\TC_{s}(X)+1$ is the smallest cardinality of open covers $\{U_i\}_i$ of $X^s$ so that $e_s$ admits a continuous section $\sigma_i$ on each $U_i$. The open sets $U_i$ of such an open cover are called {\it $s$-local domains}, the corresponding sections $\sigma_i$ are called {\it $s$-local rules}, and the resulting family of pairs $\{(U_i,\sigma_i)\}$ is called an {\it $s$-motion planning algorithm} for $X$. We say that such a family is an {\it optimal $s$-motion planning algorithm} if it has $\TC_{s}(X)+1$ $s$-local domains. These ideas are a generalization of the concept of topological complexity introduced by Farber in \cite{Far} as a model to study the continuity instabilities in the motion planning of an autonomous system (robot) whose space of configurations is $X$. The term ``higher'' comes by considering the base space $X^s$ of $e_s$ as a series of prescribed stages in the robot motion planning, while Farber's original case $s=2$ deals only with the space $X\times X$ of initial-final stages.

\begin{remark}\label{otromodelo}{\em
As shown in~\cite[pages~2106--2107]{bgrt}, $\TC_s(X)$ can equivalently be defined as the genus of the evaluation map $X^{\Gamma_s}\to X^s$, $\gamma\mapsto(\gamma(v_1),\ldots,\gamma(v_s))$, where $\Gamma_s$ is (the underlying topological space of) a given connected graph, and $v_1,\ldots,v_s$ are $s$ distinct vertices of $\Gamma_s$. In the final section of this paper it will be convenient to take $\Gamma_s$ to be the graph with exactly $s$ vertices $v_1,v_2,\ldots v_s$, and $s-1$ edges $(v_1,v_s),(v_2,v_s),\ldots(v_{s-1},v_s)$ depicted as follows:
\begin{center}
\begin{picture}(0,50)(-20,-10)
\put(0,0){\line(-1,0){95}}
\put(0,0){\line(-3,1){95}}
\put(0,0){\line(-4,1){95}}
\put(-2.5,-2.5){$\bullet$}
\put(5,-2.5){\scriptsize$v_s$}
\put(-100,-2.5){$\bullet$}
\put(-109,-2.5){\scriptsize$v_1$}
\put(-100,21.7){$\bullet$}
\put(-120,22.5){\scriptsize$v_{s-2}$}
\put(-100,30){$\bullet$}
\put(-107,7.3){\scriptsize $\vdots$}
\put(-119,31.5){\scriptsize$v_{s-1}$}
\end{picture}
\end{center}
}\end{remark}

Most of the existing methods to estimate the higher topological complexity of a space are cohomological in nature. One of the most successful such methods is a special case of Proposition~\ref{coholoweboun} below, which is easily proved on the lines of~\cite[Theorem~4 in page 73]{Schwarz66}.

\begin{proposition}\label{coholoweboun}
Let $h^*$ be a generalized cohomology theory with products. The sectional category of a fibration $\pi\colon E\to B$ is no less than the cup length of elements in the kernel of $\pi^*\colon h^*(B)\to h^*(E)$.
\end{proposition}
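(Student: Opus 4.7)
The plan is to follow Schwarz's classical argument: lift each class in the kernel to a relative cohomology class defined from the local sections of $\pi$, and then exploit multiplicativity together with the vanishing $h^{*}(B,B)=0$ to force sufficiently long cup products to die.

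Concretely, suppose $\secat(\pi)=k$, so that $B$ admits an open cover $U_{0},U_{1},\ldots,U_{k}$ on each of which $\pi$ has a continuous section $\sigma_{i}\colon U_{i}\to E$; in particular $\pi\circ\sigma_{i}=j_{i}$, where $j_{i}\colon U_{i}\hookrightarrow B$ denotes inclusion. For any $u\in\ker(\pi^{*})$ I would observe that $j_{i}^{*}(u)=\sigma_{i}^{*}\pi^{*}(u)=0$, and then invoke the long exact sequence of the pair $(B,U_{i})$ in the theory $h^{*}$ to produce a (not necessarily unique) class $\widetilde{u}_{i}\in h^{*}(B,U_{i})$ whose restriction to $h^{*}(B)$ equals $u$.

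Given $u_{0},\ldots,u_{k}\in\ker(\pi^{*})$ with chosen lifts $\widetilde{u}_{i}\in h^{*}(B,U_{i})$, the next step is to use the iterated relative cup product pairing
$$h^{*}(B,U_{0})\otimes\cdots\otimes h^{*}(B,U_{k})\longrightarrow h^{*}(B,U_{0}\cup\cdots\cup U_{k})=h^{*}(B,B)=0$$
to deduce that $\widetilde{u}_{0}\smile\cdots\smile\widetilde{u}_{k}=0$; mapping this identity down to $h^{*}(B)$ then yields $u_{0}\smile\cdots\smile u_{k}=0$. Consequently, the cup length of $\ker(\pi^{*})$ cannot exceed $k=\secat(\pi)$, which is the claimed inequality.

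The only point demanding genuine care, and the reason why the hypothesis ``generalized cohomology with products'' is stated explicitly, is the existence and naturality of this iterated relative pairing for open subsets of $B$; such a pairing is a standard feature of any multiplicative theory (equivalent to a graded-ring-valued functor compatible with restriction) and is precisely what Schwarz invokes in~\cite[Theorem~4, p.~73]{Schwarz66}. Aside from verifying this structural ingredient in whatever model of $h^{*}$ one works with, no further obstacle is anticipated.
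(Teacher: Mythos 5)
Your argument is correct and coincides with the paper's approach: the paper does not write out a proof but simply states that the proposition is ``easily proved on the lines of [Schwarz66, Theorem~4, p.~73]'', and your proof is precisely that classical Schwarz argument (lift each element of $\ker\pi^*$ to a relative class in $h^*(B,U_i)$ using the local section, then apply the iterated relative cup product landing in $h^*(B,B)=0$). No gaps.
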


Here ``cup-length'' refers to the maximal number of elements in the indicated ideal having a non-vanishing product.

\medskip
Later in the paper we will apply Proposition~\ref{coholoweboun} to the $(\mathbb{Z}_2)^{s-1}$-covering space $\pi_s$ in~(\ref{pis}). The covering space is explicitly defined and studied in this section. Let the group $(\mathbb{Z}_2)^{s-1}$, with obvious generators $\sigma_i$ ($1\leq i\leq s-1$), act on $(S^m)^{\times s}$ so that
\begin{equation}\label{laction}
\sigma_i\cdot(x_1,\ldots,x_s)=(x_1,\ldots,x_{i-1},-x_i,x_{i+1},\ldots,x_{s}).
\end{equation}
Let $\pps$ be the quotient of $(S^m)^{\times s}$ by the involution $\delta\cdot(x_1,\ldots,x_s)=(-x_1,\ldots,-x_s)$. It is elementary to check that the induced $(\mathbb{Z}_2)^{s-1}$-action on $\pps$ is principal and has orbit space $(\RP^m)^{\times s}$. This defines the $(\mathbb{Z}_2)^{s-1}$-principal bundle $\pi_s$.

\smallskip
For a path $\gamma$ in $\RP^m$, pick a lifting $\widetilde{\gamma}$ through the projection $S^m\to\RP^m$, and note that the class of $$\left(\widetilde{\gamma}\left(\frac{0}{s-1}\right),\widetilde{\gamma}\left(\frac{1}{s-1}\right),\ldots,\widetilde{\gamma}\left(\frac{s-1}{s-1}\right)\right)$$ in $\pps$ does not depend on the chosen lifting $\widetilde{\gamma}$. We get a map $(\RP^m)^{[0,1]}\to \pps$ fitting in the commutative diagram
\begin{equation}\label{cdespis}\xymatrix{
(\RP^m)^{[0,1]} \ar[rr] \ar[rd]_{e_s} && \pps \ar[dl]^{\pi_s} \\
& (\RP^m)^{\times s}, &
}\end{equation}
which readily yields~(\ref{pis}). 

\smallskip
The homotopy nature of $\pi_s$ is described through its classifying map as:

\begin{proposition}\label{elclasif}
For $1\leq i\leq s$ let $p_i\colon(\RP^m)^{\times s}\to\RP^m$ be the $i$-th projection, $\xi_m\to\RP^m$ be the Hopf bundle over $\RP^m$, and $\mu_s\colon(\RP^m)^{\times s}\to(\RP^\infty)^{\times (s-1)}$ classify $\pi_s$. Then, for $1\leq i\leq s-1$, the $i$-th component $\mu_{i,s}$ of $\mu_s$ classifies $p_i^*(\xi_m)\otimes p_s^*(\xi_m)$.
\end{proposition}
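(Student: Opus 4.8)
The plan is to identify the bundle $\pi_s$ explicitly as a pullback of the universal $(\mathbb{Z}_2)^{s-1}$-bundle and read off each component. First I would recall that a $(\mathbb{Z}_2)^{s-1}$-principal bundle is the same as an $(s-1)$-tuple of real line bundles, so that the classifying map $\mu_s$ has components $\mu_{1,s},\ldots,\mu_{s-1,s}\colon(\RP^m)^{\times s}\to\RP^\infty$, with $\mu_{i,s}$ classifying the line bundle $L_i$ associated to the $i$-th coordinate projection $(\mathbb{Z}_2)^{s-1}\to\mathbb{Z}_2$. Concretely, $L_i$ is the bundle $\pps\times_{(\mathbb{Z}_2)^{s-1}}\mathbb{R}$, where $(\mathbb{Z}_2)^{s-1}$ acts on $\mathbb{R}$ through the sign of the $\sigma_i$-coordinate (the generator in~(\ref{laction}) that flips the $i$-th sphere factor). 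So the whole statement reduces to computing this $L_i$ and showing it equals $p_i^*(\xi_m)\otimes p_s^*(\xi_m)$.

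The key computational step is to produce an explicit description of $L_i$ using the total space $(S^m)^{\times s}$. Write $E = (S^m)^{\times s}$, on which the big group $G = (\mathbb{Z}_2)^{s-1}\times\langle\delta\rangle$ acts, with $\pps = E/\langle\delta\rangle$ and $(\RP^m)^{\times s} = E/G$. Then $L_i = E\times_G\mathbb{R}$, where $G$ acts on $\mathbb{R}$ via the character $\chi_i$ that sends $\sigma_j\mapsto(-1)^{\delta_{ij}}$ and $\delta\mapsto 1$. I would then observe that on $E$, the two characters $\sigma_i\mapsto -1$ (others $\mapsto 1$) and $\sigma_s := \delta\sigma_1\cdots\sigma_{s-1}$ are related so that $\chi_i$ agrees, after accounting for $\delta$, with the character whose "flip the $i$-th sphere" part and "flip the $s$-th sphere" part are both active: precisely, the element $\delta\in G$ flips all coordinates, so modulo $\delta$ the character that flips only the $i$-th factor is the same line bundle as the character that flips the $i$-th and $s$-th factors. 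Since the line bundle over $(\RP^m)^{\times s}$ classified by "flip the $j$-th sphere" is exactly $p_j^*(\xi_m)$ (this is essentially the definition of the Hopf bundle, pulled back along $p_j$), and tensor product of real line bundles corresponds to the product of the classifying characters (equivalently, addition in $H^1(-;\mathbb{Z}_2)$), I get $L_i \cong p_i^*(\xi_m)\otimes p_s^*(\xi_m)$.

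Concretely I would carry this out as: (1) set up $E$, $G$, $\delta$, $\sigma_i$ and the identification of $\pps$ and $(\RP^m)^{\times s}$ as quotients; (2) recall the dictionary between $(\mathbb{Z}_2)^{s-1}$-bundles and $(s-1)$-tuples of line bundles, so that $\mu_{i,s}$ classifies $L_i := \pps\times_{(\mathbb{Z}_2)^{s-1}}\mathbb{R}_{\chi_i}$; (3) identify $L_i$ with $E\times_G\mathbb{R}_{\chi_i}$; (4) show $\chi_i$, as a character of $G$ restricted to what matters over $E/G$, equals the product of the "$i$-th flip" and "$s$-th flip" characters, using that $\delta$ acts trivially on $\mathbb{R}_{\chi_i}$ while flipping every sphere; (5) recognize the "$j$-th flip" line bundle over $(\RP^m)^{\times s}$ as $p_j^*(\xi_m)$ and invoke the tensor/product correspondence to conclude.

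I expect the main obstacle to be bookkeeping clarity in step~(4): making the character manipulation rigorous requires being careful about whether one is working with characters of $(\mathbb{Z}_2)^{s-1}$ (the structure group of $\pi_s$) or of the larger group $G$ acting on $E$, and tracking exactly how the extra involution $\delta$ enters. The cleanest route is probably to exhibit an explicit $(\mathbb{Z}_2)^{s-1}$-equivariant bundle isomorphism between $\pps\times_{(\mathbb{Z}_2)^{s-1}}\mathbb{R}_{\chi_i}$ and the bundle whose fiber over $[x_1,\ldots,x_s]$ is $(\mathbb{R}\cdot x_i)\otimes(\mathbb{R}\cdot x_s)$ inside $(\mathbb{R}^{m+1})\otimes(\mathbb{R}^{m+1})$ — i.e. the tautological description of $p_i^*(\xi_m)\otimes p_s^*(\xi_m)$ — by sending the class of $(x_1,\ldots,x_s;t)$ to $t\,(x_i\otimes x_s)$, and checking this is well defined under both the $\delta$-action and the $(\mathbb{Z}_2)^{s-1}$-action. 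Everything else is formal.
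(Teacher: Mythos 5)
Your argument is correct, and it reaches the same conclusion by a somewhat different route from the paper's. The paper identifies the total space $Z_i$ of the double cover classified by $\mu_{i,s}$ as the quotient of $(S^m)^{\times s}$ by $\delta$ and the $\sigma_\ell$ with $\ell\neq i$, and then determines its isomorphism type by restricting to each of the $s$ coordinate axes of $(\RP^m)^{\times s}$: an explicit check of representatives shows the restriction to the $j$-th axis is the Hopf cover when $j\in\{i,s\}$ and trivial otherwise, which pins down $w_1(Z_i)=x_i+x_s$ and hence the stated classification. You instead run the computation entirely at the level of characters of the group $G\cong(\mathbb{Z}_2)^{s}$ (generated by the $\sigma_j$ and $\delta$) acting on $E=(S^m)^{\times s}$: the line bundle $L_i=E\times_G\mathbb{R}_{\chi_i}$ is determined by $\chi_i$, and since $\delta$ acts trivially on $\mathbb{R}_{\chi_i}$ while flipping every sphere factor, one has $\chi_i=\epsilon_i\epsilon_s$ as characters of $G$ (where $\epsilon_j$ is the $j$-th coordinate-flip character); the associated-bundle dictionary then gives $L_i\cong p_i^*\xi_m\otimes p_s^*\xi_m$. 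Both arguments exploit the same explicit quotient model; yours avoids the axis-by-axis triviality check in favor of a single character identity, which is slightly slicker, while the paper's is more hands-on and makes the covering geometry visible. The explicit isomorphism you propose at the end, $(x_1,\ldots,x_s;t)\mapsto t\,(x_i\otimes x_s)$, is indeed the cleanest way to close step~(4): checking it is well defined under both $\delta$ (which sends $x_i\otimes x_s\mapsto(-x_i)\otimes(-x_s)=x_i\otimes x_s$ while fixing $t$) and each $\sigma_j$ (which multiplies both $t$ and $x_i\otimes x_s$ by $(-1)^{\delta_{ij}}$) is immediate, and writing this out would make the proof complete and self-contained.
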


The conclusion of Proposition~\ref{elclasif} can of course be stated by saying that $\mu_{i,s}$ is homotopic to the composition of the projection $p_{i,s}\colon(\RP^m)^{\times s}\to\RP^m\times\RP^m$ onto the $(i,s)$ coordinates, the inclusion $\RP^m\times\RP^m\hookrightarrow\RP^\infty\times\RP^\infty$,  and the Hopf multiplication $\mu\colon\RP^\infty\times\RP^\infty\to\RP^\infty$.

\begin{proof}[Proof of Proposition~\ref{elclasif}]
Recall $\delta$ stands for the involution $(x_1,\ldots,x_s)\mapsto(-x_1,\ldots,-x_s)$ in $(S^m)^{\times s}$ so, by definition, the corresponding orbit space is $\pps$. The total space $Z_i$ of the $\mathbb{Z}_2$-principal bundle classified by the $i$-th component $\mu_{i,s}$ is the quotient of $(S^m)^{\times s}$ by the actions of $\delta$ and of those $\sigma_\ell$ ($1\leq\ell\leq s-1$) with $\ell\neq i$, and where the $\mathbb{Z}_2$-principal action on $Z_i$ is induced by change of signs on the $i$-th coordinate. Let $\lambda_{i,j}\to\RP^m$ stand for the restriction to the $j$-th axis of the latter double covering (axes are taken with respect to the base point in $\RP^m$ given by the class of $1:=(1,0,\ldots,0)\in S^m$). 

\smallskip\noindent
{\bf Case $j=s\hspace{.3mm}$}: Note that a class in $\lambda_{i,s}$ has a unique representative of the form $(1,\ldots,1,x_s)$ and, in these terms, the $\mathbb{Z}_2$-principal action on $\lambda_{i,s}$ is given by
\begin{align*}
\left[\rule{0mm}{4mm}(1,\ldots,1,x_s)\right]\mapsto\left[(1,\ldots,1,\stackrel{\mbox{\tiny$i$}}{-1},1,\ldots,1,x_s)\right]&=\left[(-1,\ldots,-1,\raisebox{.2mm}{$\stackrel{\mbox{\tiny$i$}}{1}$},-1,\ldots,-1,-x_s)\right]\\ & =\left[(1,\ldots,1,\raisebox{.2mm}{$\stackrel{\mbox{\tiny$i$}}{1}$},1,\ldots,1,-x_s)\right],
\end{align*}
where the notation $\stackrel{\mbox{\tiny$b$}}{a}$ indicates that the number $a$ appears in the $b\,$-th coordinate of the $s$-tuple. Consequently, $\lambda_{i,s}\to\RP^m$ is homeomorphic to the Hopf projection $S^m\to\RP^m$.

\smallskip\noindent
{\bf Case $j=i\hspace{.3mm}$}: As above, a class in $\lambda_{i,i}$ has a unique representative of the form $(1,\ldots,1,\raisebox{.2mm}{$\stackrel{\mbox{\tiny$i$}}{x_i}$},1,\ldots,1)$ and, now, the $\mathbb{Z}_2$-principal action on $\lambda_{i,i}$ is antipodal on $x_i$ on the nose. Thus $\lambda_{i,i}\to\RP^m$ is also homeomorphic to the Hopf projection $S^m\to\RP^m$.

\smallskip\noindent
{\bf Case $j\not\in\{i,s\}$}: Classes in $\lambda_{i,j}$ are represented by elements $(\pm1,\ldots,\pm1,\raisebox{.2mm}{$\stackrel{\mbox{\tiny$j$}}{x_j}$},\pm1,\ldots,\pm1,\raisebox{.2mm}{$\stackrel{\mbox{\tiny$i$}}{\pm1}$},\pm1,\ldots,\pm1)$ where, to fix ideas, we have assumed $j<i<s$ ---the case $i<j<s$ works just as well. Dividing out first by the action of $\delta$ and of the $\sigma_\ell$ with $\ell\not\in\{i,j\}$ (and then by the action of $\sigma_j$), we see that $\lambda_{i,j}$ is given as the quotient of $S^m\times\mathbb{Z}_2$ by the antipodal action on the first coordinate and with $\mathbb{Z}_2$-principal action coming from the antipodal action on the second coordinate. In other words, $\lambda_{i,j}\to\RP^m$ is the trivial $\mathbb{Z}_2$-bundle.

\medskip
The conclusion follows.
\end{proof}

\section{Motion planning algorithms via equivariant maps}\label{sectionjoins}
Recall that the $(k+1)$-iterated self join-power of a topological space $X$, $J_k(X)$, is defined inductively by $J_k(X):=J_{k-1}(X)\ast X$ ($k\geq1$) where $J_0(X)=X$. Then, for a topological group $G$, $B_kG:=J_k(G)/G$ is the $k$-th stage in Milnor's construction of the classifying space $BG:=\left.J_\infty(G)\right/G,$ where $G$ acts diagonally on the vertices of $J_\infty(G):=\bigcup_{k\geq0}J_k(G)$ ---so barycentric coordinates are preserved.

\medskip
In what follows $G_s$ stands for the (discrete) group $(\mathbb{Z}_2)^{\times(s-1)}$. By~\cite[Theorem~9  in page 86]{Schwarz66}, the classifying homotopy class $\mu_s$ in Proposition~\ref{elclasif} has a representative factoring in the form
\begin{equation}\label{fitf}
(\RP^m)^{\times s}\stackrel{\beta_s}{\longrightarrow} B_{\secat(\pi_s)}(G_s)\hookrightarrow B(G_s)\simeq(\RP^\infty)^{\times(s-1)},
\end{equation}
where $\beta_s$ is covered by a $G_s$-equivariant map $\alpha_s\colon\pps\to J_{\secat(\pi_s)}(G_s)$. Then, in terms of the $G_s$-action defined in~(\ref{laction}), the composition of the canonical projection $(S^m)^{\times s}\to\pps$ with $\alpha_s$ yields a $G_s$-equivariant map $\phi_s\colon(S^m)^{\times s}\to J_{\secat(\pi_s)}(G_s)$ satisfying the condition
\begin{equation}\label{condadd}
\phi_s(x_1,\ldots,x_{s-1},-x_s)=\sigma_1\cdots \sigma_{s-1}\cdot\phi_s(x_1,\ldots,x_{s-1},x_s), \hspace{2mm}\mbox{for all $(x_1,\cdots,x_s)\in(S^m)^{\times s}$.}
\end{equation}

\begin{conjecture}\label{lacopr}
An $s$-motion planning algorithm for $\RP^m$ with $\secat(\pi_s)+1$ $s$-local rules can be constructed out of a map $\phi_s$ as above. Consequently $\secat(\pi_s)\geq\TC_s(\RP^m)$, and~(\ref{pis}) becomes an equality for any $s\geq2$.
\end{conjecture}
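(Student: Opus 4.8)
\medskip
\noindent\emph{Towards a proof.} The plan is to carry over Farber--Tabachnikov--Yuzvinsky's step~(\ref{i2na}), which settles the case $s=2$, to all $s$: from a map $\phi_s$ as above one would build an $s$-motion planning algorithm for $\RP^m$ with exactly $\secat(\pi_s)+1$ local domains, which forces $\TC_s(\RP^m)\leq\secat(\pi_s)$ and hence, by~(\ref{pis}), the asserted equality.

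The starting point is the classical extraction of local sections that underlies~(\ref{fitf}). The join $J_{\secat(\pi_s)}(G_s)$ is covered by its $\secat(\pi_s)+1$ open stars $W_0,\dots,W_{\secat(\pi_s)}$, where $W_j$ consists of the points with positive $j$-th join coordinate; each $W_j$ is a disjoint union of $2^{s-1}$ contractible pieces freely permuted by $G_s$, the permutation being recorded by the ``slot-$j$ label'' map $W_j\to G_s$. Pulling this cover back along $\phi_s$ and descending to $(\RP^m)^{\times s}$ yields an open cover $\{U_j\}_{j=0}^{\secat(\pi_s)}$ of $(\RP^m)^{\times s}$ together with a section $\tau_j$ of $\pi_s$ over $U_j$, namely the one assigning to $\bar{x}=([x_1],\dots,[x_s])\in U_j$ the lift $(\widetilde{x}_1,\dots,\widetilde{x}_s)\in(S^m)^{\times s}$ ---well defined up to the simultaneous antipode $\delta$--- for which $\phi_s(\widetilde{x}_1,\dots,\widetilde{x}_s)$ has slot-$j$ label the identity of $G_s$. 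Working in the star-graph model $\Gamma_s$ of Remark~\ref{otromodelo}, an $s$-local rule over $U_j$ is precisely a continuous choice, for each $\bar{x}\in U_j$ and each $k\in\{1,\dots,s-1\}$, of a path in $\RP^m$ from $[x_k]$ to $[x_s]$; the obvious candidate is the image in $\RP^m$ of the unique minimal geodesic arc of $S^m$ from $\widetilde{x}_k$ to $\widetilde{x}_s$. This is independent of the $\delta$-ambiguity ---a simultaneous sign change sends the arc to its $S^m$-antipode, which has the same image in $\RP^m$--- and is continuous on the open subset of $U_j$ where $\widetilde{x}_k\neq-\widetilde{x}_s$ for every $k$.

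The real work is at the \emph{antipodal locus}, the closed subset of $U_j$ on which $\widetilde{x}_k=-\widetilde{x}_s$ for some $k$; it is contained in the union of the partial diagonals $D_k=\{\bar{x}:[x_k]=[x_s]\}$, and there the geodesic recipe degenerates. The idea, following FTY \cite{MR1988783}, is to first deform $\phi_s$ (equivalently $\alpha_s$, or the classifying map $\mu_s$ of $\pi_s$) so as to trivialize its restrictions to the $D_k$. Proposition~\ref{elclasif} is exactly what is needed here: it gives $\mu_{k,s}|_{D_k}=a_k+a_s=0$ in $H^1(D_k;\mathbb{Z}_2)$ (with $a_i$ the generator coming from the $i$-th factor, and $a_k=a_s$ on $D_k$), so the $k$-th component of $\phi_s$ is null over $D_k$ and may be homotoped to a constant there. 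Once $\phi_s$ has been so normalized, a single distinguished domain ---say $U_0$--- can be arranged to contain $\bigcup_k D_k$ while its section $\tau_0$ has $\widetilde{x}_k=+\widetilde{x}_s$ along each $D_k$; then near these diagonals the two relevant lifts are close, the geodesic recipe is short and extends across $D_k$ as the constant path, and the remaining domains can be shrunk off $\bigcup_k D_k$, where the recipe above applies unchanged. For $s=2$ there is a single diagonal, a single component to trivialize, and no interference, so this goes through without enlarging the cover --- this is essentially the content of step~(\ref{i2na}), and the reason the equality is elementary for $s=2$ (cf.\ Remarks~\ref{recoverFTY}--\ref{ylaotra}).

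The step I expect to be the essential obstacle is the \emph{coherent} normalization of $\phi_s$ when $s\geq3$: the partial diagonals $D_1,\dots,D_{s-1}$ overlap ---their mutual intersections being the higher partial diagonals of $(\RP^m)^{\times s}$--- while a homotopy of $\phi_s$ moves all $s-1$ of its components at once, so one must trivialize the $k$-th component over $D_k$ for every $k$ simultaneously and still retain a distinguished domain that covers all the $D_k$ with the right relative behaviour. It is not clear that the component-by-component cohomological vanishing above assembles into such a coherent deformation, and this is why equality in~(\ref{pis}) remains conjectural in general. Proposition~\ref{emp} and the cohomological computations of Section~\ref{seccohpe} are evidence that it does; for $m$ even and $s$ large enough it does, and the resulting explicit value of $\TC_s(\RP^m)$ is the content of Corollary~\ref{partilar}.
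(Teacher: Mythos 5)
You are, correctly, not claiming to prove the statement — and indeed the paper does not prove it either: it is stated and left as Conjecture~\ref{lacopr}, with Proposition~\ref{emp} settling it only under the extra hypotheses~(1) or~(2), and Remarks~\ref{morsubt}--\ref{ylaotra} discussing why those hypotheses are hard to verify for $s\geq3$. Your geodesic-rotation construction of the $s$-local rules on the star graph $\Gamma_s$, with continuity breaking only on the antipodal/partial-diagonal locus and the fix of shrinking all but one domain off it, is exactly the content of the proof of Proposition~\ref{emp}, so the parts of your sketch that amount to an actual argument do match the paper.

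However, two points in your ``towards a proof'' are imprecise in ways the paper is careful about, and they are in fact where the difficulty lies. First, you speak of ``the $k$-th component of $\phi_s$,'' but $\phi_s$ (respectively $\beta_s$) takes values in the join $J_{\secat(\pi_s)}(G_s)$ (respectively the Milnor stage $B_{\secat(\pi_s)}(G_s)$), which is not a product; only the unfiltered classifying map $\mu_s\colon(\RP^m)^{\times s}\to(\RP^\infty)^{\times(s-1)}$ has components. Proposition~\ref{elclasif} does give $\mu_{k,s}|_{D_k}\simeq\ast$, but transferring anything about $\mu_s|$ to the filtered $\beta_s|$ requires the inclusion $B_{\secat(\pi_s)}(G_s)\hookrightarrow B(G_s)$ to be an equivalence in the range of $\dim(q(D_s))=(s-1)m$. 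This is Remark~\ref{morsubt}: one needs $(s-1)m<\secat(\pi_s)$, which (via~(\ref{suertota})) is known when $m+1$ is not a power of~$2$ but is open otherwise, and is the higher-$s$ analogue of the $s=2$ appeal to Adams' Hopf-invariant-one theorem. Your sketch never records this dimension constraint.

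Second, the obstruction you name (``coherent normalization'' of the components) is real, but the paper's Remark~\ref{ylaotra} makes a sharper point that your sketch misses: for $s\geq3$ the map $\mu_s$ itself is \emph{not} nullhomotopic on $q(D_s)$ (even though each $\mu_{k,s}$ is nullhomotopic on its own $D_k$), so no choice of $\beta_s$ can be made constant on the pivoted diagonal. The $s=2$ trick — homotoping to a map that is literally constant on the diagonal, so that $\phi_2(D_2)$ is a point — is therefore provably unavailable, and one must instead aim directly for condition~(1) or~(2) of Proposition~\ref{emp}, which is a statement about where $\phi_s(D_s)$ sits among the components of the $U_j$'s, not a nullhomotopy statement. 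Finally, your closing sentence overstates Corollary~\ref{partilar}: for $m$ even and $s$ large the paper proves the numerical equality $\TC_s(\RP^m)=\secat(\pi_s)$ by squeezing both between $sm$ and $\zcl_s(\RP^m)$; it does not exhibit a $\phi_s$ satisfying the hypotheses of Proposition~\ref{emp}, so the constructive first clause of Conjecture~\ref{lacopr} is not established there either.
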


The conjecture is motivated in part by (the proof of)~\cite[Proposition~6.3]{MR1988783}, which asserts that the case $s=2$ of Conjecture~\ref{lacopr} holds true ---see Propositions~\ref{emp} and Remark~\ref{recoverFTY} below. Corollary~\ref{partilar} in the next section is meant to gather further evidence for the plausibility of Conjecture~\ref{lacopr}. A few additional instances where Conjecture~\ref{lacopr} holds true are included in the final section of this paper. 

\begin{remark}\label{reversible}{\em
One of our main interests in Conjecture~\ref{lacopr} is the possibility of obtaining upper bounds for $\TC_s(\RP^m)$ from the construction of $G_s$-equivariant maps $\phi_s\colon(S^m)^{\times s}\to J_k(G_s)$ satisfying~(\ref{condadd}). Indeed, such a map covers a map $\beta_s$ as in~(\ref{fitf}), so that~\cite[Theorem~9  in page 86]{Schwarz66} implies $k\geq\secat(\pi_s)$, and so $k\geq\TC_s(\RP^m)$ if Conjecture~\ref{lacopr} were to hold.
}\end{remark}

Given spaces $X$ and $Y$, consider the open subspace $U\subset X\ast Y$ consisting of the barycentric expressions $t_0 x +t_1 y$ with ($x\in X$, $y\in Y$, $0\leq t_i$, $t_0+t_1=1$, and) $t_1>0$. Observe that, if $Y$ is discrete, $U$ is a topological disjoint union of open cones with base $X$ (the cones are open in the sense that they are missing their base). In such terms, the following auxiliary result becomes self-evident.

\begin{lemma}\label{compoents}
For $k\geq0$, $s\geq2$, and $0\leq j\leq k$, consider the open set $U_j\subset J_k(G_s)$ consisting of the barycentric expressions $\,\sum_{\ell=0}^kt_\ell g_\ell$ with $t_j>0$ (here, as usual, $g_\ell\in G_s$, $t_\ell\geq0$, and $\sum t_\ell=1$). Then $U_j$ is closed under the action of $G_s$, and has $2^{s-1}\!$ connected components, each of which is open in $U_j$ and contractible (in itself). Further, the induced $G_s$-action on the set of connected components of $U_j$ has a single orbit.
\end{lemma}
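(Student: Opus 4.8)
The plan is to reduce the statement to the observation recorded in the paragraph immediately preceding it. Thinking of $J_k(G_s)$ as the iterated join of the $k+1$ copies $G_s^{(0)},\dots,G_s^{(k)}$ of $G_s$ (indexed by $\ell=0,\dots,k$), I would single out the $j$-th factor: set $X$ to be the join of the $k$ copies with $\ell\neq j$ (so $X\cong J_{k-1}(G_s)$ when $k\geq1$, and $X=\emptyset$ when $k=0$) and $Y:=G_s^{(j)}$, so that $J_k(G_s)\cong X\ast Y$ and, under this homeomorphism, $U_j$ is exactly the open set $U=\{t_0x+t_1y:t_1>0\}$ from that paragraph. The case $k=0$ is trivial ($U_0=G_s$, a discrete set of $2^{s-1}$ points), so assume $k\geq1$; in particular $X$ is nonempty.

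Since $Y=G_s$ is discrete with $|G_s|=2^{s-1}$, the cited observation says $U_j$ is the topological disjoint union of $2^{s-1}$ open cones: for each $g\in G_s$ put $C_g:=\{\,t_0x+t_1g:x\in X,\ 0<t_1\leq1\,\}$, the cone on $X$ with vertex $g$ with its base removed. Each $C_g$ is then open in $U_j$ (a summand of a topological disjoint union), and it is contractible, since $H(t_0x+t_1g,u):=(1-u)t_0x+\bigl(1-(1-u)t_0\bigr)g$ is a well-defined deformation retraction of $C_g$ onto its vertex $g$ (the coefficient $1-(1-u)t_0\geq t_1>0$ of $g$ stays positive, so $H$ lands in $C_g$). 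Being contractible, each $C_g$ is connected; hence the $C_g$ are precisely the connected components of $U_j$, and there are $2^{s-1}$ of them, each open and contractible.

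For the equivariance assertions, recall that $G_s$ acts on $J_k(G_s)$ by diagonal left translation on vertices, $g\cdot\sum_\ell t_\ell g_\ell=\sum_\ell t_\ell(gg_\ell)$, which fixes every barycentric coordinate $t_\ell$; in particular it preserves the condition $t_j>0$, so $U_j$ is $G_s$-invariant. On components, $g$ sends the vertex $h\in G_s^{(j)}$ to $gh$, hence carries $C_h$ homeomorphically onto $C_{gh}$; since left translation of $G_s$ on itself is transitive, the induced $G_s$-action on $\{C_g:g\in G_s\}$ is transitive, i.e.\ has a single orbit, which finishes the argument.

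I expect essentially no obstacle here; the authors rightly call the statement self-evident. The one point deserving the line of care given above is that the join topology genuinely splits $U_j$ as a disjoint union indexed by the discrete factor $G_s$, so that the contractible pieces $C_g$ are \emph{open} (not merely closed) in $U_j$ --- and this is precisely the observation we are invoking.
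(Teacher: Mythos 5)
Your argument is correct and is exactly the intended fleshing-out of what the paper leaves implicit: the paper states the lemma with no written proof, declaring it ``self-evident'' from the preceding observation on $X\ast Y$ with $Y$ discrete, and you simply apply that observation with $Y=G_s^{(j)}$ and $X$ the join of the remaining $k$ factors. The component count, the openness of each cone, the straight-line contraction to the vertex, and the transitivity of the barycentric-coordinate-preserving $G_s$-action on the components are all the right details and are handled correctly.
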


\begin{proposition}\label{emp}
Let $D_s=\{(x_1,\ldots,x_s)\in(S^m)^{\times s}\;\colon x_i=x_s\mbox{ for some $i\in\{1,\ldots,s-1$}\}\}$. The conclusions in Conjecture~\ref{lacopr} hold true if one starts with a $G_s$-equivariant map $\phi_s\colon(S^m)^{\times s}\to J_{\secat(\pi_s)}(G_s)$ satisfying~(\ref{condadd}) together with one of the following conditions:
\begin{enumerate}[(1)]
\item For \emph{every} $j\in\{0,1,\ldots,\secat(\pi_s)\}$, $\phi_s(D_s)$ intersects at most a single component of $U_j$.

\vspace{-2mm}
\item For \emph{some} $j_0\in\{0,1,\ldots,\secat(\pi_s)\}$, $\phi_s(D_s)$ is fully contained in some component of $U_{j_0}$.
\end{enumerate}
\end{proposition}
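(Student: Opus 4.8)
The plan is to convert the equivariant data directly into an $s$-motion planning algorithm for $\RP^m$ with $\secat(\pi_s)+1$ local domains, generalizing the $s=2$ argument in \cite[Prop.~6.3]{MR1988783} and working throughout with the star graph $\Gamma_s$ of Remark~\ref{otromodelo} (this is the model for which the degenerate locus that matters is exactly $D_s$). Write $k=\secat(\pi_s)$ and let $p\colon(S^m)^{\times s}\to(\RP^m)^{\times s}$ be the projection. Since $\phi_s$ is $G_s$-equivariant and satisfies~(\ref{condadd}), it is invariant under the global antipodal involution $\delta$ (the product of $\sigma_1\cdots\sigma_{s-1}$ with the sign change on the $s$-th coordinate), hence factors through $\alpha_s\colon\pps\to J_k(G_s)$. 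Pulling back the cover $\{U_0,\dots,U_k\}$ of $J_k(G_s)$ of Lemma~\ref{compoents} along $\phi_s$, the sets $V_j:=\phi_s^{-1}(U_j)$ are stable under $G_s$ and under $\delta$, hence under all coordinatewise sign changes; so they descend to an open cover $\{W_0,\dots,W_k\}$ of $(\RP^m)^{\times s}$ with $k+1$ members, and $p^{-1}(W_j)=V_j$. Composing $\phi_s|_{V_j}$ with the projection of $U_j$ onto its set of components (a free transitive $G_s$-set, which we identify $G_s$-equivariantly with $G_s$) yields a $G_s$-equivariant, $\delta$-invariant map $\rho_j\colon V_j\to G_s$, so that $\rho_j(x)$ records which component of $U_j$ contains $\phi_s(x)$.

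Next I would build, for each $j$ and each base element $c\in G_s$, a continuous section $\vartheta_j^{c}$ of $\pi_s$ over $W_j$. The assignment $x\mapsto(c+\rho_j(x))\cdot x$ is constant on $G_s$-orbits (every element of $G_s$ is $2$-torsion) and intertwines the two antipodal involutions (because $\rho_j$ is $\delta$-invariant and the sign-change group is abelian), so it descends to $\vartheta_j^{c}\colon W_j\to\pps$; concretely it provides, for $L\in W_j$, a continuous choice of lifts $(\hat x_1(L),\dots,\hat x_s(L))\in(S^m)^{\times s}$, well defined up to a global sign. The $j$-th local rule then sends $L$ to the map $\Gamma_s\to\RP^m$ which is $L_i$ on $v_i$ and runs, along the edge $(v_i,v_s)$, the image in $\RP^m$ of the unique minimizing great-circle arc from $\hat x_i(L)$ to $\hat x_s(L)$. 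Flipping the global sign replaces each arc by its antipode, so this is independent of the ambiguity in $\vartheta_j^{c}$, and it is continuous provided no $\hat x_i(L)$ is antipodal to $\hat x_s(L)$. As $\hat x_i(L)=-\hat x_s(L)$ forces $L_i=L_s$, the only obstruction lives over the closed locus $p(D_s)=\{L\colon L_i=L_s\text{ for some }i\le s-1\}$.

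The crux is choosing the base elements so that this obstruction vanishes, and this is where hypotheses (1) and (2) enter. If $L$ has $I:=\{i\le s-1\colon L_i=L_s\}\neq\varnothing$, pick a lift $x_0$ of $L$ with $(x_0)_i=(x_0)_s$ for all $i\in I$; then $x_0\in D_s$, and $x_0\in V_j$ whenever $L\in W_j$. A direct computation gives $(c+\rho_j(x_0))\cdot x_0=x_0$ when $c=\rho_j(x_0)$, so if the base element $c_j$ is taken equal to $\rho_j(x_0)$ then $\vartheta_j^{c_j}(L)=[x_0]$, whence $\hat x_i(L)=\hat x_s(L)$ for every $i\in I$ while the remaining edges cause no trouble since $L_i\neq L_s$ there. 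Under hypothesis~(1), $\phi_s(D_s)\cap U_j$ lies in a single component, so $\rho_j$ is constant, say equal to $c_j$, on the full preimage $\phi_s^{-1}\bigl(\phi_s(D_s)\cap U_j\bigr)\supseteq D_s\cap V_j\ni x_0$; this $c_j$ thus works uniformly over $W_j$ (and if $\phi_s(D_s)\cap U_j=\varnothing$ then $W_j$ misses $p(D_s)$ and any $c_j$ works), and the $k+1$ rules $\vartheta_j^{c_j}$ finish the construction. Under hypothesis~(2), $D_s\subseteq V_{j_0}$ gives $p(D_s)\subseteq W_{j_0}$; I would keep $W_{j_0}$ with base element $c_{j_0}=g$ (the constant value of $\rho_{j_0}$ on $D_s$) and replace each $W_j$, $j\neq j_0$, by the open set $W_j\smallsetminus p(D_s)$ --- still an open cover with $k+1$ members --- using any base element there, the obstruction locus over it being empty.

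Either way one obtains an $s$-motion planning algorithm for $\RP^m$ with $\secat(\pi_s)+1$ local domains, whence $\secat(\pi_s)\ge\TC_s(\RP^m)$; combined with~(\ref{pis}) this yields $\TC_s(\RP^m)=\secat(\pi_s)$, exactly the conclusions of Conjecture~\ref{lacopr}. I expect the one genuinely delicate point to be the sign bookkeeping of the third paragraph --- checking that a single base element $c_j$ keeps every relevant pair $\hat x_i(L),\hat x_s(L)$ non-antipodal simultaneously over the whole of $W_j\cap p(D_s)$. For $s=2$ this is Farber--Tabachnikov--Yuzvinsky's observation about where the diagonal of $\RP^m\times\RP^m$ is sent by an axial map; for $s\ge3$ it is precisely the difficulty that the combinatorial conditions (1)/(2) are designed to bypass, whereas the remaining verifications (well-definedness and continuity of $\vartheta_j^{c}$ and of the geodesic rule, and the covering properties) are routine.
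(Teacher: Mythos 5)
Your proposal is correct and follows essentially the same route as the paper's proof: pull back the canonical open cover $\{U_j\}$ of $J_{\secat(\pi_s)}(G_s)$ to get $\{V_j\}$ and descend to $\{W_j\}$, note that over each point of $W_j$ exactly one $\pm$-pair of lifts lands in a chosen component, and use the uniform-speed geodesic arcs with the hypotheses (1)/(2) guaranteeing well-definedness and continuity over $q(D_s)$. Your device of labeling components via $\rho_j\colon V_j\to G_s$ and packaging the choice as a base element $c_j$ is just a clean bookkeeping reformulation of the paper's direct selection of the component $U_{j,0}\supseteq\phi_s(D_s)\cap U_j$; the substance, including the treatment of case (2) by removing $q(D_s)$ from the other $W_j$, is identical.
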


\begin{remark}\label{recoverFTY}{\em
The easy fact that, for $s=2$, there exist maps $\phi_2$ as that assumed in Proposition~\ref{emp} was first noted in~\cite[Lemmas~5.3 and~5.7]{MR1988783}. Explicitly, it is standard that the case $m=1,3,7$ can be accounted by using the multiplication in the complex, quaternion, and octonion numbers, respectively. For $m\neq1,3,7$, since the diagonal inclusion $\RP^m\hookrightarrow\RP^m\times\RP^m$ is a cofibration, any axial map $\alpha\colon\RP^m\times\RP^m\to\RP^{\secat(\pi_2)}$, being nullhomotopic on the diagonal\footnote{This uses the fact that $\secat(\pi_2)>m$, which in turn comes from the assumption $m\neq1,3,7$ (compare to Remark~\ref{morsubt}).}, is homotopic to a map $\alpha'\colon\RP^m\times\RP^m\to\RP^{\secat(\pi_2)}$ which is (necessarily axial and) actually constant on the diagonal. Then any map $\phi_2\colon S^m\times S^m\to J_{\secat(\pi_2)}(\mathbb{Z}_2)=S^{\secat(\pi_2)}$ covering $\alpha'$ is a fortiori constant on the diagonal. In particular, such maps $\phi_2$ satisfy \emph{both} conditions (1) and (2) in Proposition~\ref{emp} for, obviously, the singleton $\phi_2(D_2)$ is fully contained in some component of \emph{each} $U_j$ satisfying $\phi_2(D_2)\cap U_j\neq\varnothing$.
}\end{remark}

\begin{proof}[Proof of Proposition~\ref{emp}]
For $0\leq j\leq \secat(\pi_s)$, set $V_j=\phi_s^{-1}(U_j)\subseteq(S^m)^{\times s}$, and $W_j=q(V_j)\subseteq(\RP^m)^{\times s}$ where $q$ stands for the composition $(S^m)^{\times s}\to\pps\to(\RP^m)^{s}$ of canonical projections. Note that the equality 
\begin{equation}\label{kenfklw}
V_j=q^{-1}(W_j)
\end{equation}
holds since $V_j$ is closed under the action of $\delta$ and of the $\sigma_\ell$ with $1\leq\ell\leq s-1$ (as the $G_s$-equivariant map $\phi_s$ satisfies~(\ref{condadd})). Further, the sets $W_0,\ldots,W_{\secat(\pi_s)}$ form an open cover of $(\RP^m)^{\times s}$.

\smallskip
If condition~(1) in the statement of the proposition holds, we complete the proof by constructing local sections $\varsigma_j\colon W_j\to (\RP^m)^{\Gamma_s}$ ($0\leq j\leq\secat(\pi_s)$) for the evaluation map $(\RP^m)^{\Gamma_s}\to(\RP^m)^{\times s}$ described at the end of Remark~\ref{otromodelo}. Details follow. Fix $j\in\{0,1,\ldots,\secat(\pi_s)\}$. If $\phi_s(D_s)$ intersects $U_j$, let $U_{j,0}$ denote the component of $U_j$ containing $\phi_s(D_s)\cap U_j$; otherwise, choose any component $U_{j,0}$ of $U_j$. For $(L_1,\ldots,L_s)\in W_j$, the $2^s$ elements in $q^{-1}\{(L_1,\ldots,L_s)\}$ lie in $V_j$, in view of~(\ref{kenfklw}). Also, in view of~(\ref{condadd}) and the final assertion in Lemma~\ref{compoents}, exactly two elements in $q^{-1}(L_1,\ldots,L_s)$ have $\phi_s$-image in $U_{j,0}$. Indeed, if one of the latter elements is $(x_1,x_2,\ldots,x_s)$, then the other is $(-x_1,-x_2,\ldots-x_s)$. Furthermore, in these conditions,
\begin{equation}\label{cerezap}
\mbox{if $L_i=L_s$ for some $1\leq i<s$, then in fact $x_i=x_s$,}
\end{equation}
by construction (and in view of Lemma~\ref{compoents}). Then $\varsigma_j(L_1,\ldots,L_s)\colon \Gamma_s\to\RP^m$ is defined to be the map whose restriction to the (oriented) edge $(v_i,v_s)$ describes the uniform-speed motion in $\RP^m$ from $L_i$ to $L_s$ obtained by rotating $L_i$ toward $L_s$ along the plane generated by these two lines (no motion if $L_i=L_s$), and in such a way that the corresponding rotation from $x_i$ to $x_s$ is performed through an angle smaller than $180$ degrees. As shown in the picture below, the latter requirement holds independently of whether one uses $(x_1,\ldots,x_s)$ or $(-x_1,\ldots,-x_s)$, so that $\varsigma_j(L_1,\ldots,L_s)$ is well defined.
\begin{center}
\begin{picture}(0,98)(0,-48)
\put(0,0){\line(0,1){50}}
\put(0,0){\line(0,-1){50}}
\put(0,0){\line(1,2){25}}
\put(0,0){\line(-1,-2){25}}
\put(-2.5,35.5){$\bullet$}
\put(15,33){$\bullet$}
\put(-2.5,-38.5){$\bullet$}
\put(-19.5,-36){$\bullet$}
\put(-19,32){\small $x_i$}
\put(9,-35){\small $-x_i$}
\put(-35,-33){\small $x_s$}
\put(25,27){\small $-x_s$}
\qbezier(-25,27)(-65,7)(-41,-25)
\put(-39.5,-26){\vector(1,-1){0}}
\qbezier(30,-30)(70,-10)(46,22)
\put(45.4,23.8){\vector(-1,2){0}}
\end{picture}
\end{center}
 The resulting function $\varsigma_j$ is clearly a section on $W_j$ for the evaluation map $(\RP^m)^{\Gamma_s}\to(\RP^m)^s$. Lastly, the continuity of $\varsigma_j$ follows from~(\ref{cerezap}), and from the facts that $U_{j,0}$ is open, that $\phi_s$ is continuous, and that $q$ is a covering projection.
 
 \smallskip
A minor modification of the above construction is needed in order to complete the proof when condition~(2) in the statement of the proposition holds. Indeed, in the notation above, the problematic $q(D_s)$ is contained in $W_{j_0}':=W_{j_0}$, while condition~(2) assures that the construction above yields the needed local section $\zeta'_{j_0}=\zeta_{j_0}\colon W'_{j_0}\to(\RP^m)^{\Gamma_s}$. For all other $j\neq j_0$ we set $W'_j:=W_j-q(D_s)$ (so that the sets $W'_i$ with $0\leq i\leq\secat(\pi_s)$ cover $(\RP^m)^{\times s}$), which (is open and) vacuously avoids the possibility of the failure of~(\ref{cerezap}), thus yielding an obviously continuous local section $\zeta'_j\colon W'_j\to(\RP^m)^{\Gamma_s}$.
\end{proof}

\red{Regarding a potential proof of Conjecture~\ref{lacopr}, the authors believe that, for general $s\geq2$, Proposition~\ref{elclasif} would have to play a key role in proving the existence of a map $\phi_s$ as the one assumed in Proposition~\ref{emp}. However, the problem seems to be much more subtle for $s\geq3$ than the rather straightforward instance $s=2$. We close this section by pinpointing some of the intricacies that are inherent to a potential proof of Conjecture~\ref{lacopr} via Proposition~\ref{elclasif} when $s\geq3$, and how this leads to a couple of interesting new challenges in the field (which we hope to address elsewhere).}

\begin{remark}\label{morsubt}{\em
We start by discussing the relevance of the inequality
\begin{equation}\label{suertota}
\secat(\pi_s\colon\mathrm{P}_{\mathbf{m}_s}\to(\RP^m)^{\times s})\geq(s-1)m,\mbox{ with strict inequality if $m+1$ is not a power of $2$}
\end{equation}
(obtained in~(\ref{pisextendida}) and~Remark~\ref{cisd} below from Proposition~\ref{coholoweboun}) in a potential proof of Conjecture~\ref{lacopr}. Recall that the isomorphism class of the $G_s$-principal bundle $\pi_s$ has been described in Proposition~\ref{elclasif} via the homotopy type of its classifying map $\mu_s\colon(\RP^m)^s\to(\RP^\infty)^{\times(s-1)}$. Of course, the homotopy type of any map $\beta_s\colon(\RP^m)^s\to B_{\secat(\pi_s)}(G_s)$ fitting in the factorization~(\ref{fitf}) does not have to be determined by that of $\mu_s$. Nonetheless, as noted in Remark~\ref{recoverFTY}, a key fact in the proof of the $s=2$ case of Conjecture~\ref{lacopr} is that any such $\beta_2$ remains being null homotopic on the diagonal when $m\neq1,3,7$, as $\secat(\pi_2)>m$ for those values of $m$. (As explained in~\cite[Lemma~5.4]{MR1988783}, the latter inequality turns out to be closely related to Adams' solution of the Hopf invariant~1 problem.) Now, for $s\geq3$, the diagonal is replaced by the ``pivoted'' diagonal $q(D_s)$ used at the end of the proof of Proposition~\ref{emp}. Then, in order to understand the homotopy properties of the restricted $\beta_s|_{D_s}$ from the corresponding properties of the restricted $\mu_s|_{D_s}$ (as in the case $s=2$), we would need to know that $\dim(D_s)$ is {\em strictly smaller} than the connectivity of the inclusion $B_{\secat(\pi_s)}(G_s)\hookrightarrow B_\infty(G_s)\simeq(\RP^\infty)^{\times(s-1)}$. Such a condition is assured by~(\ref{suertota}) if $m+1$ is not a power of 2, as the latter map is a $\secat(\pi_s)$-equivalence (its homotopy fiber agrees with that for the (obviously) $\secat(\pi_s)$-equivalence $J_{\secat(\pi_s)}\to\ast$), while $D_s$ is a union of subcomplexes of $(\RP^m)^{\times s}$ each homeomorphic to $(\RP^m)^{\times(s-1)}$, so that $\dim(D_s)=(s-1)m$. Consequently, the first task to deal with in a proof of Conjecture~\ref{lacopr} based on Proposition~\ref{emp} is to decide whether~(\ref{suertota}) can be improved to a strict inequality when $m+1$ is a power of 2. As indicated in Example~\ref{hopfspaces} below,~(\ref{suertota}) is in fact an equality for $m=1,3,7$, in which case~(\ref{pis}) is an equality too. Thus, the real initial task is to decide whether~(\ref{suertota}) actually improves to a strict inequality for $m=2^e-1$ with $e\geq4$ ---just as in the case $s=2$. A particularly interesting feature of such a challenge is to understand how a potential strict inequality in~(\ref{suertota}) would fit within (a possibly generalized form of) the Hopf invariant~1 problem.
}\end{remark}

\begin{remark}\label{ylaotra}{\em
In addition to the considerations in Remark~\ref{morsubt}, it should be noted that, unlike the situation for $s=2$, no map $\beta_s$ as above can be nullhomotopic on $D_s$ when $s\geq3$ for, in fact, $\mu_s$ evidently fails to be nullhomotopic on $D_s$. Consequently, unlike the situation for $s=2$ discussed in Remark~\ref{recoverFTY}, the issue of being able to ``fix'' a $G_s$-equivariant map $\phi_s$ as in~(\ref{condadd}) so to satisfy at least one of the two conditions in Proposition~\ref{emp} requires handling non-trivial homotopy information.
}\end{remark}

\section{Cohomology estimates}\label{seccohpe}
This section is devoted to estimating the sharpness of~(\ref{pis}) by means of cohomological methods. In particular, we show equality for all even $m$ when $s$ is large enough. Explicitely, an application of Proposition~\ref{coholoweboun} to $e_s$, which is a fibrational replacement for the diagonal $\Delta_s\colon X\hookrightarrow X^{\times s}$, yields the lower bound $$\TC_s(X)\geq\zcl_s^{h^*}(X),$$ where $\zcl_s^{h^*}(X)$ is the $h^*$-cup-length of $s$-th zero-divisors in $X$, i.e.~of elements in the kernel of the induced map $\Delta_s^*\colon h^*(X^{\times s})\to h^*(X)$ (see~\cite[Definition~3.8]{bgrt}). In this section we show that, when $X:=\RP^m$ and $h^*:=H^*$ is singular cohomology with mod 2 coefficients, $\zcl_s(\RP^m):=\zcl_s^{H^*}(\RP^m)$ is in fact a lower bound for the right hand-side in~(\ref{pis}), which, for $m$ odd and $s$ large enough, agrees with the well known upper bound $sm\geq\TC_s(\RP^m)$ coming from~\cite[Theorem~3.9]{bgrt}.

\medskip Recall that $H^*((\RP^m)^{\times s})=H^*(\RP^m)^{\otimes s}$ is the $\mathbb{Z}_2$-algebra generated by the classes $x_i=p_i^*(x)$ subject to the relations $x_i^{m+1}=0$, $1\leq i\leq s$, where $x\in H^1(\RP^m)$ is the first Stiefel-Whitney class of $\xi_m$, and $p_i$ is defined in Proposition~\ref{elclasif}. We do not stress the dependence of $x_i$ on $s$ because, if $s'>s$ and $\pi_{s,s'}\colon(\RP^m)^{\times s'}\to(\RP^m)^{\times s}$ is the projection onto the first $s$ coordinates, then we think of the map induced in cohomology by $\pi_{s,s'}$ as a honest inclusion. The standard (graded) basis of $H^*((\RP^m)^{\times s})$ consists of all the monomials $x_1^{a_1}x_2^{a_2}\cdots x_s^{a_s}$ with $0\leq a_i\leq m\,$. Note that each $x_i+x_s$ ($1\leq i\leq s-1$) is an $s$-th zero-divisor, so it pulls back trivialy under the evaluation map $e_s^*$. In fact:

\begin{proposition}\label{eulerclass}
For $1\leq i\leq s-1$, $x_i+x_s$ pulls back trivially under the map $\pi_s$ on the right of~(\ref{cdespis}).
\end{proposition}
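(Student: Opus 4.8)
The plan is to deduce Proposition~\ref{eulerclass} from Proposition~\ref{elclasif} together with the elementary principle that a cohomology class pulled back from a classifying space becomes trivial on the total space of the corresponding principal bundle. The first step is to exhibit $x_i+x_s$ as a class pulled back along the classifying map $\mu_s$. Indeed, by Proposition~\ref{elclasif} the $i$-th component $\mu_{i,s}=\mathrm{pr}_i\circ\mu_s$ classifies the real line bundle $p_i^*(\xi_m)\otimes p_s^*(\xi_m)$; equivalently, as recorded just after that proposition, $\mu_{i,s}$ is homotopic to the composite of $p_{i,s}\colon(\RP^m)^{\times s}\to\RP^m\times\RP^m$, the inclusion $\RP^m\times\RP^m\hookrightarrow\RP^\infty\times\RP^\infty$, and the Hopf multiplication $\mu\colon\RP^\infty\times\RP^\infty\to\RP^\infty$. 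Since $w_1$ is additive on tensor products of line bundles (equivalently, since $\mu^*$ sends the generator $t\in H^1(\RP^\infty;\mathbb{Z}_2)$ to $t\otimes 1+1\otimes t$, there being no room for a cup-product term in degree $1$), we get $\mu_{i,s}^*(t)=x_i+x_s$. Writing $t_i\in H^1\big((\RP^\infty)^{\times(s-1)};\mathbb{Z}_2\big)$ for the $i$-th canonical generator, this says $\mu_s^*(t_i)=x_i+x_s$ for $1\le i\le s-1$.

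The second step uses that $\mu_s$ classifies $\pi_s$: fixing a $G_s$-bundle isomorphism $\pi_s\cong\mu_s^*(\varpi)$, where $\varpi\colon EG_s\to BG_s$ is the universal principal $G_s$-bundle, one obtains an honest pullback square
\[
\xymatrix{
\pps \ar[d]_{\pi_s} \ar[r]^{\widetilde{\mu}_s} & EG_s \ar[d]^{\varpi} \\
(\RP^m)^{\times s} \ar[r]_{\mu_s} & BG_s,
}
\]
so that $\varpi\circ\widetilde{\mu}_s=\mu_s\circ\pi_s$. As $EG_s$ (that is, Milnor's $J_\infty(G_s)$) is weakly contractible, $H^{1}(EG_s;\mathbb{Z}_2)=0$, hence $\varpi^*(t_i)=0$, and therefore
\[
\pi_s^*(x_i+x_s)=\pi_s^*\mu_s^*(t_i)=\widetilde{\mu}_s^*\,\varpi^*(t_i)=0,
\]
which is the assertion.

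Alternatively, one can bypass classifying spaces and argue directly that the line bundle $\pi_s^*\big(p_i^*(\xi_m)\otimes p_s^*(\xi_m)\big)$ over $\pps$ is trivial, whence its first Stiefel--Whitney class $\pi_s^*(x_i+x_s)$ vanishes. For this, recall that (in the notation of the proof of Proposition~\ref{elclasif}) the double cover of $(\RP^m)^{\times s}$ associated to that line bundle is the space $Z_i=(S^m)^{\times s}\big/\langle\delta,\,\sigma_\ell:\ell\neq i\rangle$, and that the canonical projection $\pps=(S^m)^{\times s}/\langle\delta\rangle\to Z_i$ is a morphism over $(\RP^m)^{\times s}$; it therefore determines a section of the pullback along $\pi_s$ of the covering $Z_i\to(\RP^m)^{\times s}$. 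A double cover admitting a section is trivial, so we are done.

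I do not anticipate a genuine difficulty here; the only thing requiring care is bookkeeping. Concretely: matching the description of the components of $\mu_s$ furnished by Proposition~\ref{elclasif} with the canonical generators $t_i$; recalling the standard fact that $\mu^*(t)=t\otimes 1+1\otimes t$ for the Hopf multiplication (equivalently, the additivity of $w_1$ under $\otimes$ of line bundles); and checking that, once a bundle isomorphism $\pi_s\cong\mu_s^*(\varpi)$ is fixed, the square above commutes on the nose rather than merely up to homotopy. With these routine points settled, the pullback computation is immediate.
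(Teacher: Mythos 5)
Your proposal is correct, and both of your routes are valid, but neither is quite the paper's. The paper's argument is more self-contained: it observes that the projection $(S^m)^{\times s}\to S^m\times S^m$ onto the $(i,s)$ axes induces a bundle map from $\pi_s$ to $\pi_2$ over $p_{i,s}\colon(\RP^m)^{\times s}\to\RP^m\times\RP^m$, writes $x_i+x_s=p_{i,s}^*(x\otimes1+1\otimes x)$, and then simply invokes that $x\otimes1+1\otimes x$ is the mod~$2$ Euler class of the external product $p_1^*\xi_m\otimes p_2^*\xi_m$ while $\pi_2$ is the sphere bundle of that line bundle --- so the Euler class pulls back to zero. This sidesteps Proposition~\ref{elclasif} and the classifying-space formalism entirely.

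Your primary argument instead passes through the classifying map $\mu_s$, Proposition~\ref{elclasif} to identify $\mu_{i,s}^*(t)=x_i+x_s$, the strict pullback square with $EG_s$, and contractibility of $EG_s$. This is a correct and more conceptual deduction (it really just repackages the universal case of the sphere-bundle fact), at the cost of importing the full force of Proposition~\ref{elclasif} and Milnor's model. Your alternative argument --- exhibiting a section of the pulled-back double cover $Z_i$ over $\pps$ coming from the quotient map $\pps\to Z_i$ --- is closest in spirit to the paper, since in both cases the decisive point is that the relevant double cover (equivalently, line bundle) becomes trivial over $\pps$; you phrase this via a section, the paper via vanishing of the Euler class on the sphere bundle. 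Either way the substance is the same, and your reasoning is sound.

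One small remark on the primary route: you correctly flag the need to check that the square commutes on the nose rather than only up to homotopy, but in fact this is not needed --- even a homotopy-commutative square suffices, since cohomology classes of maps depend only on homotopy classes, and $\varpi^*$ is zero in positive degrees regardless. So the bookkeeping you were worried about can be dropped.
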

\begin{proof}
The projection $(S^m)^{\times s}\to S^m\times S^m$ onto the $(i,s)$ axes induces a map from $\pi_s$ to $\pi_2$ liying over $p_{i,s}$. The conclusion then follows since $x\otimes1+1\otimes x\in H^1(\RP^m\times RP^m)$, the mod 2 Euler class of the exterior product $\xi_m\otimes\xi_m$, vanishes under $\pi_2$, which is the sphere bundle of $\xi_m\otimes\xi_m$.
\end{proof}

\begin{lemma}\label{generators}
The ideal of $s$-th zero-divisors in $H^*(\RP^m)^{\otimes s}$ is generated by the elements $x_i+x_s$ in Proposition~\ref{eulerclass}.
\end{lemma}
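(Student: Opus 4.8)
The plan is to show that every $s$-th zero-divisor in $H^*(\RP^m)^{\otimes s}$ lies in the ideal $I=(x_1+x_s,\,x_2+x_s,\,\ldots,\,x_{s-1}+x_s)$, the reverse inclusion being clear since $\Delta_s^*(x_i+x_s)=x+x=0$. The key observation is that the quotient ring $H^*(\RP^m)^{\otimes s}/I$ is forced to be $\mathbb{Z}_2[x_1,\ldots,x_s]/(x_1^{m+1},\ldots,x_s^{m+1})$ modulo the relations $x_i=x_s$ for $1\le i\le s-1$; that is, it is the cyclic $\mathbb{Z}_2$-algebra $\mathbb{Z}_2[x_s]/(x_s^{m+1})$, which has $\mathbb{Z}_2$-dimension exactly $m+1$. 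Hence the quotient map $H^*(\RP^m)^{\otimes s}\to H^*(\RP^m)^{\otimes s}/I$ is identified with $\Delta_s^*$ as soon as one knows that $\Delta_s^*$ is surjective (which it is, since $\Delta_s^*(x_1^a)=x^a$ hits a $\mathbb{Z}_2$-basis of $H^*(\RP^m)$) and that its kernel has $\mathbb{Z}_2$-dimension equal to $\dim_{\mathbb{Z}_2}H^*(\RP^m)^{\otimes s}-(m+1)$.

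First I would set up the short exact sequence of $\mathbb{Z}_2$-vector spaces $0\to I\to H^*(\RP^m)^{\otimes s}\to H^*(\RP^m)^{\otimes s}/I\to 0$ and compute $\dim_{\mathbb{Z}_2}(H^*(\RP^m)^{\otimes s}/I)$. For this, the natural change of variables $y_i:=x_i+x_s$ for $1\le i\le s-1$ and $y_s:=x_s$ expresses $H^*(\RP^m)^{\otimes s}$ as $\mathbb{Z}_2[y_1,\ldots,y_s]$ modulo the relations $y_i^{m+1}+(\text{terms involving }y_s)=0$ coming from $x_i^{m+1}=(y_i+y_s)^{m+1}=0$. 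Quotienting by $I=(y_1,\ldots,y_{s-1})$ kills all $y_i$ with $i<s$ and leaves $\mathbb{Z}_2[y_s]/(y_s^{m+1})$, of dimension $m+1$. Since $\dim_{\mathbb{Z}_2}H^*(\RP^m)^{\otimes s}=(m+1)^s$, the ideal $I$ has dimension $(m+1)^s-(m+1)$, which matches $\dim_{\mathbb{Z}_2}\ker(\Delta_s^*)$ because $\Delta_s^*$ is onto $H^*(\RP^m)$, of dimension $m+1$. Combined with the containment $I\subseteq\ker(\Delta_s^*)$ proved by the trivial computation above (or already implicit in the remark preceding the lemma), the dimension count forces $I=\ker(\Delta_s^*)$, which is exactly the ideal of $s$-th zero-divisors.

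The main obstacle is making the change-of-variables argument fully rigorous at the level of graded algebras: one must check that replacing $x_i$ by $y_i+y_s$ really does present $H^*(\RP^m)^{\otimes s}$ as a quotient of the polynomial ring $\mathbb{Z}_2[y_1,\ldots,y_s]$ by the transformed relations, and that quotienting further by $(y_1,\ldots,y_{s-1})$ behaves as expected — i.e.\ that no collapse or unexpected relation appears. A clean way to sidestep delicate Gröbner-basis bookkeeping is to argue with the monomial basis directly: the monomials $x_1^{a_1}\cdots x_s^{a_s}$ with $0\le a_i\le m$ and at least one $a_i>0$ for $i<s$ span $I$ modulo the relation $x_i\equiv x_s$, so their images generate $H^*(\RP^m)^{\otimes s}/I$ together with the powers $x_s^a$, $0\le a\le m$; the powers $x_s^a$ are $\mathbb{Z}_2$-independent in the quotient because $\Delta_s^*$ sends them to the independent classes $x^a$ and factors through the quotient, pinning the quotient dimension to be at least $m+1$ and (by the spanning statement) exactly $m+1$. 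With the dimension nailed down, the identification $I=\ker(\Delta_s^*)$ is immediate.
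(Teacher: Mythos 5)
Your proof is correct but follows a genuinely different route from the paper's. The paper's argument is constructive: it writes a homogeneous zero-divisor as a sum of standard monomials, observes that in degree at most $m$ the number of summands must be even (since each monomial maps under $\Delta_s^*$ to a nonzero power of $x$), and then reduces to two inductive sub-claims --- that the sum of any two standard monomials of the same degree $\le m$ lies in $I_s$, and that any standard monomial of degree $> m$ lies in $I_s$ --- each handled by explicit manipulation of the generators $x_i+x_s$. Your argument is instead a dimension count: $I \subseteq \ker\Delta_s^*$ is immediate, so it suffices to show both have the same $\mathbb{Z}_2$-codimension $m+1$ in $H^*(\RP^m)^{\otimes s}$; surjectivity of $\Delta_s^*$ handles the kernel, while the identification $H^*(\RP^m)^{\otimes s}/I \cong \mathbb{Z}_2[x_s]/(x_s^{m+1})$ handles $I$. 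Your caution about making the latter identification rigorous is warranted, and the workaround you give --- spanning the quotient by the classes of $x_s^a$ (since $x_1^{a_1}\cdots x_s^{a_s}\equiv x_s^{a_1+\cdots+a_s}$ mod $I$) combined with the lower bound coming from $\Delta_s^*$ factoring through the quotient --- is clean and correct. Both proofs are valid; yours is shorter and more conceptual, while the paper's constructive reduction gives an explicit recipe for writing a zero-divisor in terms of the generators.
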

\begin{proof}
Let $\sum_{(a_1,\ldots,a_s)} x_1^{a_1}\cdots x_s^{a_s}$
be the expression  of an homogeneous $s$-th zero-divisor $z$ in terms of the standard basis. Note that the number of summands must be even if $\deg(z)\leq m$. Thus, it suffices to prove that the following elements lie in the ideal $I_s$ generated by the binomials $x_i+x_s$:
\begin{enumerate}[(i)]
\item\label{abajo} The sum of any two basis elements in degree at most $m$.
\item\label{arriba} A basis element in degree greater than $m$.
\end{enumerate}
Elements in~(\ref{abajo}) are easily dealt with by induction on the degree and on the number of common factors. For instance
$$
x_1x_2+x_3x_4=(x_1x_2+x_2x_3)+(x_2x_3+x_3x_4)=x_2(x_1+x_3)+x_3(x_2+x_4).
$$
Elements in~(\ref{arriba}) are dealt with also by an inductive argument based on the fact that, for $i<j$,
$$
x_i^{a_i}x_{i+1}^{a_{i+1}}\cdots x_j^{a_j}=(x_i+x_j)\cdot x_i^{a_i-1}x_{i+1}^{a_{i+1}}\cdots x_j^{a_j}+x_i^{a_i-1}x_{i+1}^{a_{i+1}}\cdots x_{j-1}^{a_{j-1}}x_j^{a_j+1}\equiv x_i^{a_i-1}x_{i+1}^{a_{i+1}}\cdots x_{j-1}^{a_{j-1}}x_j^{a_j+1},
$$
where the congruence holds module $I_s$.
\end{proof}

Thus (\ref{pis}) extends to
\begin{equation}\label{pisextendida}
sm\geq\TC_s(\RP^m)\geq\secat(\pi_s)\geq\zcl_s(\RP^m).
\end{equation}
Set $G(m,s)=sm-\zcl_s(\RP^m)$, so equality holds in~(\ref{pis}) whenever $G(m,s)=0$.
\begin{lemma}\label{Gsequencepaperform}
$G(m,2)\geq G(m,3)\geq G(m,4) \geq \cdots \geq0$.
\end{lemma}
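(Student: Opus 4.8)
The plan is to collapse the whole chain to the single recursive estimate
\[
\zcl_{s+1}(\RP^m)\ \geq\ \zcl_s(\RP^m)+m\qquad(s\geq2),
\]
together with the trivial bound $\zcl_s(\RP^m)\leq sm$. Since $G(m,s)=sm-\zcl_s(\RP^m)$, the recursive estimate gives $G(m,s)-G(m,s+1)=\zcl_{s+1}(\RP^m)-\zcl_s(\RP^m)-m\geq0$, and the bound gives $G(m,s)\geq0$; together these are exactly the asserted ``decreasing and nonnegative'' statement.

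The bound $\zcl_s(\RP^m)\leq sm$ is a degree count. By Lemma~\ref{generators} the ideal $I_s$ of $s$-th zero-divisors is generated by the degree-one classes $x_i+x_s$, so $I_s\subseteq\bigoplus_{d\geq1}H^d((\RP^m)^{\times s})$; hence in a non-vanishing product $z_1\cdots z_N$ of elements of $I_s$ some homogeneous component is nonzero, and that component is a sum of products of positive-degree pieces, forcing its total degree to be $\geq N$, while also $\leq sm$ because $H^d((\RP^m)^{\times s})=0$ for $d>sm$. Thus $N\leq sm$.

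For the recursive estimate, put $k=\zcl_s(\RP^m)$ and fix zero-divisors $z_1,\dots,z_k\in H^*((\RP^m)^{\times s})$ with $z_1\cdots z_k\neq0$. The projection $q\colon(\RP^m)^{\times(s+1)}\to(\RP^m)^{\times s}$ onto the first $s$ coordinates satisfies $q\circ\Delta_{s+1}=\Delta_s$, so $q^*$ sends $s$-th zero-divisors to $(s+1)$-th zero-divisors; moreover, as recalled in Section~\ref{seccohpe}, $q^*$ realizes $H^*((\RP^m)^{\times s})=\mathbb{Z}_2[x_1,\dots,x_s]/(x_i^{m+1})$ as the subring of $H^*((\RP^m)^{\times(s+1)})$ not involving $x_{s+1}$, and is injective. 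Hence $P:=q^*(z_1)\cdots q^*(z_k)=q^*(z_1\cdots z_k)$ is a non-vanishing product of $k$ zero-divisors in $H^*((\RP^m)^{\times(s+1)})$ which is a polynomial in $x_1,\dots,x_s$ alone. I would then multiply $P$ by $m$ copies of the $(s+1)$-th zero-divisor $x_s+x_{s+1}$ (a generator in the sense of Lemma~\ref{generators}, with pivot $s+1$) and verify $(x_s+x_{s+1})^m\,P\neq0$: viewing $H^*((\RP^m)^{\times(s+1)})$ as the free $\mathbb{Z}_2[x_1,\dots,x_s]/(x_i^{m+1})$-module on $1,x_{s+1},\dots,x_{s+1}^m$, and using the (untruncated) expansion $(x_s+x_{s+1})^m=\sum_{j=0}^m\binom{m}{j}x_s^{\,m-j}x_{s+1}^{\,j}$, the coefficient of $x_{s+1}^m$ in $(x_s+x_{s+1})^m\,P$ equals $\binom{m}{m}P=P\neq0$, since the summands with $j<m$ contribute only lower powers of $x_{s+1}$. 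This exhibits a non-vanishing product of $k+m$ zero-divisors, so $\zcl_{s+1}(\RP^m)\geq\zcl_s(\RP^m)+m$.

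The only delicate point — which I flag as the crux — is the last computation: the extra zero-divisor and the monomial tracked must be chosen so that the truncations $x_i^{m+1}=0$ and possible cancellations are controlled. Bringing in the fresh variable $x_{s+1}$ at its top power $x_{s+1}^m$, with coefficient $\binom{m}{m}=1$, handles both at once, since that monomial cannot arise from the terms $\binom{m}{j}x_s^{\,m-j}P\,x_{s+1}^{\,j}$ with $j<m$; any lower power of $x_{s+1}$ would be entangled with truncated powers of $x_s$ and could a priori vanish. Everything else is formal.
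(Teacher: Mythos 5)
Your argument is correct and is essentially the same as the paper's: both start from a nonzero product $z$ of $s$-th zero-divisors, multiply by the $m$-th power of a generator of the form $x_i+x_{s+1}$ (the paper uses $x_1+x_{s+1}$, you use $x_s+x_{s+1}$, a cosmetic difference), and observe that the coefficient of $x_{s+1}^m$ in the result is $z\neq0$, giving $\zcl_{s+1}(\RP^m)\geq\zcl_s(\RP^m)+m$. Your explicit degree count for $\zcl_s(\RP^m)\leq sm$ just spells out the ``$\geq0$'' that the paper leaves implicit.
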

\begin{proof}
Thinking in terms of the expression of elements as sums of the standard basis of $H^*(\RP^m)^{\otimes s}$, we see that if $z\in H^*((\RP^m)^{\times s})$ is a non-zero product of $s$-th zero-divisors, then $$z\cdot (x_1+x_{s+1})^m=z\cdot(x_{s+1}^m+\cdots)\neq0.$$ So, $\zcl_{s+1}(\RP^m)\geq\zcl_s(\RP^m)+m$ and the result follows.
\end{proof}

\begin{remark}\label{cisd}{\em
It is elementary to check that $\zcl_2(\RP^m)=2^{z(m)}-1$, where $z(m)$ is the integral part of $\log_2(2m)$ (c.f.~\cite[Theorem~4.5]{MR1988783}). The last line in the previous proof then implies $\zcl_s(\RP^m)\geq(s-1)m$ with strict inequality if $m+1$ is not a power of 2. The proof of Theorem~\ref{determinationofgapsversionpaper} below is based on a streamlined version of the previous inequality.
}\end{remark}

The monotonic sequence of non-negative integers in Lemma~\ref{Gsequencepaperform} stabilizes, and we denote by $G(m)$ the corresponding stable value. 

\begin{theo}\label{determinationofgapsversionpaper}
Assume $m\equiv2^e-1\!\!\!\mod2^{e+1}$ with $e\geq0$. In other words, $e$ is the length of the block of consecutive ones ending the binary expansion of $m$. For instance, $e=0$ if and only if $m$ is even. Then $G(m)\leq2^e-1$ with equality if $m$ is even, or if $m=2^e-1$. In fact, $G(m,s)\leq2^e-1$ for $s\geq\max\{(m+1)/2^e,2\}$. Specifically, if $m>2^e-1$ and $\sigma$ stands for $(m+1)/2^e$ (so $\sigma$ is an integer greater than $2$), then the product of $\sigma$-th zero-divisors
$$
(x_1+x_\sigma)^{m+2^e}\cdots(x_{\sigma-1}+x_\sigma)^{m+2^e}\in H^*((\RP^m)^\sigma)
$$
is non-zero.
\end{theo}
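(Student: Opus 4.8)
The strategy is to derive everything from a single mod~$2$ computation in $H^*\!\big((\RP^m)^{\times\sigma}\big)=\mathbb{Z}_2[x_1,\dots,x_\sigma]\big/\big(x_1^{m+1},\dots,x_\sigma^{m+1}\big)$ --- namely the non-vanishing of the displayed product --- and then to propagate the resulting bound to all admissible $s$, and hence to $G(m)$, by the monotonicity Lemma~\ref{Gsequencepaperform}. I would first dispose of the case $m=2^e-1$, which falls outside the ``Specifically'' clause because there the integer $\sigma:=(m+1)/2^e$ equals $1$. In this case $m+1=2^e$, so the Frobenius identity in characteristic $2$ gives $(x_i+x_s)^{m+1}=x_i^{2^e}+x_s^{2^e}=0$ in $H^*\!\big((\RP^m)^{\times s}\big)$ for all $i$. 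Expanding an arbitrary product of $s$-th zero-divisors in terms of the generators $x_i+x_s$ of the zero-divisor ideal (Lemma~\ref{generators}) shows that $\zcl_s(\RP^m)$ is attained by a product of those generators; but as soon as one of them is used at least $m+1$ times the product dies, so $\zcl_s(\RP^m)\le(s-1)m$. Together with the reverse inequality from Remark~\ref{cisd} this yields $\zcl_s(\RP^m)=(s-1)m$, i.e.\ $G(m,s)=m=2^e-1$ for every $s\ge2$, hence $G(m)=2^e-1$, and all assertions of the theorem hold in this case.

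Next, assume $m>2^e-1$, so that $\sigma=(m+1)/2^e$ is an integer $\ge3$ (with $\sigma=m+1$ when $m$ is even), and put $P:=\prod_{i=1}^{\sigma-1}(x_i+x_\sigma)^{m+2^e}$. The key observation is that in the expansion of $P$ over $\mathbb{Z}$ the monomial taken from the $i$-th factor is determined by its $x_i$-exponent alone, so different choices of exponents produce different monomials and \emph{no cancellation occurs}: the coefficient of $x_1^{c_1}\cdots x_{\sigma-1}^{c_{\sigma-1}}x_\sigma^{N}$, with $N=\sum_{i=1}^{\sigma-1}(m+2^e-c_i)$, is exactly $\prod_{i=1}^{\sigma-1}\binom{m+2^e}{c_i}$. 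Choosing $c_1=\dots=c_{\sigma-1}=m$ gives $N=(\sigma-1)2^e=m+1-2^e$, and $0\le N\le m$, so the monomial $x_1^m\cdots x_{\sigma-1}^m x_\sigma^{\,m+1-2^e}$ is not killed by the relations $x_k^{m+1}=0$ and appears in $P$ with coefficient $\binom{m+2^e}{m}^{\sigma-1}$. Finally $\binom{m+2^e}{m}=\binom{m+2^e}{2^e}$ is \emph{odd}: by hypothesis the binary expansion of $m$ ends in exactly $e$ ones, so the $2^e$-digit of $m$ is $0$, adding $2^e$ turns that digit into a $1$ with no carry, and Lucas' theorem applies. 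Hence $P\neq0$ in $H^*\!\big((\RP^m)^{\times\sigma}\big)$.

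It remains to read off the conclusions. Each $x_i+x_\sigma$ is a $\sigma$-th zero-divisor, so $P$ witnesses a non-zero product of $(\sigma-1)(m+2^e)$ of them; since $\sigma2^e=m+1$ we have $(\sigma-1)(m+2^e)=\sigma m-(2^e-1)$, whence $\zcl_\sigma(\RP^m)\ge\sigma m-(2^e-1)$, i.e.\ $G(m,\sigma)\le2^e-1$. Lemma~\ref{Gsequencepaperform} then gives $G(m,s)\le G(m,\sigma)\le2^e-1$ for all $s\ge\sigma=\max\{(m+1)/2^e,2\}$, and passing to the stable value yields $G(m)\le2^e-1$; when $m$ is even, $G(m)\ge0=2^e-1$ forces equality. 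The only non-formal ingredient is the computation in the previous paragraph, and within it the one genuinely delicate point is the Lucas-theorem parity of $\binom{m+2^e}{2^e}$ --- precisely the place where the congruence $m\equiv2^e-1\pmod{2^{e+1}}$ enters; everything else is bookkeeping (the degree arithmetic $(\sigma-1)(m+2^e)=\sigma m-(2^e-1)$ and the monotonicity step).
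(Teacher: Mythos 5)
Your proof is correct and follows essentially the same route as the paper: you isolate the monomial $x_1^m\cdots x_{\sigma-1}^mx_\sigma^{(\sigma-1)2^e}$ in the expansion of the displayed product, show its coefficient is $\binom{m+2^e}{2^e}^{\sigma-1}$ (odd, by Lucas, precisely because the $e$-th binary digit of $m$ vanishes), check it survives the truncation since $(\sigma-1)2^e=m+1-2^e\le m$, and then push the bound on $\zcl_\sigma$ through Lemma~\ref{Gsequencepaperform} to all $s\ge\sigma$. The only cosmetic difference is that you re-derive the $m=2^e-1$ boundary case in place, whereas the paper simply cites Example~\ref{hopfspaces}; the underlying argument (Frobenius kills $(x_i+x_s)^{2^e}$, forcing $\zcl_s=(s-1)m$) is identical.
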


\begin{conjecture}\label{conejura}
In Theorem~\ref{determinationofgapsversionpaper}, the equality\hspace{.3mm}\footnote{Conjecture~\ref{conejura} has recently been proved in~\cite[Theorem~3.3]{CGG}.} $G(m)=2^e-1$ holds without restriction on $e$.
\end{conjecture}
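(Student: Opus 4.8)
The plan is to supply the lower bound $G(m)\ge 2^e-1$, the part not covered by Theorem~\ref{determinationofgapsversionpaper}. Since that theorem already gives $G(m)\le 2^e-1$, and the sequence $\{G(m,s)\}_{s\ge2}$ of Lemma~\ref{Gsequencepaperform} is non-increasing with stable value $G(m)$, it suffices to prove the single inequality $\zcl_s(\RP^m)\le sm-(2^e-1)$ for every $s\ge2$. The first move is to turn $\zcl_s$ into an explicit combinatorial quantity. By Lemma~\ref{generators} the ideal of $s$-th zero-divisors in $H^*((\RP^m)^{\times s})=\mathbb{Z}_2[x_1,\dots,x_s]/(x_j^{m+1})$ is generated by the degree-one classes $y_i:=x_i+x_s$, $1\le i\le s-1$. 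Writing any product of $d$ homogeneous zero-divisors as $z_1\cdots z_d$ with $z_j=\sum_{i<s}c_{ij}y_i$ ($c_{ij}\in H^*$), expanding and regrouping the resulting terms by the multi-exponent they contribute in the $y_i$'s shows that $z_1\cdots z_d\ne0$ forces $\prod_{i=1}^{s-1}y_i^{k_i}\ne0$ for some $(k_i)$ with $\sum_ik_i=d$ (conversely such monomials in the $y_i$ realize the cup length). Hence
\[
\zcl_s(\RP^m)=\max\Bigl\{\textstyle\sum_{i=1}^{s-1}k_i\ :\ \prod_{i=1}^{s-1}(x_i+x_s)^{k_i}\ne0\Bigr\}.
\]

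Next I would record a clean non-vanishing criterion. Expanding $\prod_{i=1}^{s-1}(x_i+x_s)^{k_i}$ in the monomial basis, the term attached to an exponent tuple $(j_1,\dots,j_{s-1})$ with $0\le j_i\le k_i$ is a scalar multiple of $x_1^{j_1}\cdots x_{s-1}^{j_{s-1}}x_s^{\sum_i(k_i-j_i)}$; distinct tuples yield distinct monomials (already on the first $s-1$ variables), so there is no cancellation, and by Lucas' theorem the scalar is $\neq0$ iff each $j_i$ is a bitwise submask of $k_i$. Putting $a_i:=j_i$ and $b_i:=k_i-j_i$, submask-ness is the same as $a_i\wedge b_i=0$ and $k_i=a_i+b_i$, while survival in the truncated ring is $a_i\le m$ and $\sum_i b_i\le m$. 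Therefore
\[
\zcl_s(\RP^m)=\max\Bigl\{\textstyle\sum_{i=1}^{s-1}(a_i+b_i)\ :\ a_i\wedge b_i=0,\ 0\le a_i\le m,\ b_i\ge0,\ \sum_{i=1}^{s-1}b_i\le m\Bigr\}.
\]

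The crux is then a bit-counting estimate on any feasible $(a_i,b_i)$. Write $m=2^{e+1}t+(2^e-1)$, so bits $0,\dots,e-1$ of $m$ are $1$ and bit $e$ is $0$, and split $b_i=b_i^{\mathrm{lo}}+b_i^{\mathrm{hi}}$ into the parts supported on bits $<e$ and on bits $\ge e$. Then (i) each $b_i^{\mathrm{hi}}$ is divisible by $2^e$, hence so is $\sum_i b_i^{\mathrm{hi}}$; being $\le\sum_ib_i\le m$, it is at most the largest multiple of $2^e$ not exceeding $m$, namely $m-(2^e-1)$. And (ii) $b_i^{\mathrm{lo}}\le m-a_i$ for each $i$: indeed $b_i^{\mathrm{lo}}$ is a submask of $2^e-1$ disjoint from $a_i$, so $a_i+b_i^{\mathrm{lo}}=a_i\vee b_i^{\mathrm{lo}}\le a_i\vee(2^e-1)$, and the last quantity is $\le m$ because $a_i\le m$ while $m$ has all of its lowest $e$ bits set (a short case analysis on the top bit where $a_i$ and $m$ differ). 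Combining (i) and (ii),
\[
\textstyle\sum_i(m-a_i)\ \ge\ \sum_i b_i^{\mathrm{lo}}\ =\ \sum_i b_i-\sum_i b_i^{\mathrm{hi}}\ \ge\ \sum_i b_i-\bigl(m-(2^e-1)\bigr),
\]
and hence $sm-\sum_i(a_i+b_i)=\sum_i(m-a_i)+\bigl(m-\sum_i b_i\bigr)\ge 2^e-1$. This yields $\zcl_s(\RP^m)\le sm-(2^e-1)$ for all $s$, so $G(m)\ge 2^e-1$, which together with Theorem~\ref{determinationofgapsversionpaper} gives $G(m)=2^e-1$.

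The main obstacle is step (ii) of the bit-counting estimate, i.e.\ pinpointing that the unavoidable loss lives in the terminal block of $e$ ones of $m$: any bit that a summand $b_i$ places inside that block is forbidden to $a_i$, and (because $a_i\le m$) must be "paid for" by a genuine deficit $m-a_i$, whereas bits of $b_i$ above the block are constrained only through the divisibility-by-$2^e$ congestion in $\sum_i b_i\le m$. Once this is isolated, the rest is bookkeeping; one should still check the trivial edge cases ($e=0$, where the statement degenerates to "$m$ even $\Rightarrow G(m)=0$", and small $s$ including $s=2$), but these are immediate from the formulas above.
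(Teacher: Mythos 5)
Your argument is correct, but be aware that the paper itself contains no proof of this statement: it appears only as Conjecture~\ref{conejura}, with a footnote deferring the proof to \cite[Theorem~3.3]{CGG}. What you supply is precisely the half not covered by Theorem~\ref{determinationofgapsversionpaper}, namely the lower bound $G(m)\geq 2^e-1$, and it checks out. Your reduction of $\zcl_s(\RP^m)$ to the combinatorial maximum of $\sum_i(a_i+b_i)$ subject to $a_i\wedge b_i=0$, $0\leq a_i\leq m$, $\sum_i b_i\leq m$ is a sound consequence of Lemma~\ref{generators} together with Lucas/Kummer (and the restriction to homogeneous zero-divisors is immaterial, since the ideal-expansion argument works for arbitrary elements of the kernel ideal); this is the same toolkit the paper deploys in Remark~\ref{cisd} and in the proof of Theorem~\ref{determinationofgapsversionpaper}, pushed one step further to yield an \emph{upper} bound for $\zcl_s$. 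Your two key estimates are both verifiable: (i) $\sum_i b_i^{\mathrm{hi}}$ is a multiple of $2^e$ not exceeding $m$, hence at most $m-(2^e-1)$ since $m\equiv 2^e-1\pmod{2^e}$; and (ii) $b_i^{\mathrm{lo}}\leq m-a_i$, because $a_i+b_i^{\mathrm{lo}}=a_i\vee b_i^{\mathrm{lo}}\leq a_i\vee(2^e-1)\leq m$, the last inequality following from $a_i\leq m$ and the fact that the lowest $e$ bits of $m$ are all set (write $a_i=2^e\alpha+r$ and $m=2^e\mu+(2^e-1)$, so $a_i\leq m$ forces $\alpha\leq\mu$). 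These combine exactly as you indicate to give $sm-\sum_i(a_i+b_i)\geq 2^e-1$ for every feasible configuration and every $s\geq2$, hence $G(m,s)\geq 2^e-1$ for all $s$ and so $G(m)\geq 2^e-1$; together with the unconditional inequality $G(m)\leq 2^e-1$ of Theorem~\ref{determinationofgapsversionpaper}, this proves the conjecture. Since the paper offers no argument of its own to compare against (only the citation to \cite{CGG}), relative to the present text your proof is a genuine, self-contained and elementary bit-counting addition, fully in the spirit of the cohomological estimates of Section~\ref{seccohpe}.
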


\begin{example}\label{hopfspaces}{\em
For $e\geq1$ and $s\geq2$,
\begin{equation}\label{potenciasde2}
0\neq x_1^{2^e-1} x_2^{2^e-1}\cdots x_{s-1}^{2^e-1}+\cdots =(x_1+x_s)^{2^e-1}(x_2+x_s)^{2^e-1}\cdots(x_{s-1}+x_s)^{2^e-1}\in H^*((\RP^{2^e-1})^{\times s}),
\end{equation}
which yields 
$G(2^e-1,s)\leq2^e-1$. The latter inequality is in fact an equality in view of Lemma~\ref{generators} and the fact that the $2^e$-th power of any element in $H^*((\RP^{2^e-1})^{\otimes s})$ vanishes. In the case of the three Hopf spaces $\RP^1$, $\RP^3$, and $\RP^7$, the $\TC_s(\RP^m)$-contents of the assertion $G(2^e-1,s)=2^e-1$ is strengthened by~\cite[Theorem~1]{MR3020869}: $\TC_s(\RP^m)=m(s-1)$ for all $s$ and $m\in\{1,3,7\}$. Thus, for all $s\geq2$ and $m\in\{1,3,7\}$, the last three terms in~(\ref{pisextendida}) are all equal to $m(s-1)$.
}\end{example}

\begin{proof}[Proof of Theorem~\ref{determinationofgapsversionpaper}]
The case $m=2^e-1$ is accounted for in Example~\ref{hopfspaces}, so we assume $m>2^e-1$. The hypothesis on $m$ and $e$ implies that the binomial coefficient $\binom{m+2^e}{2^e}$ is odd, so
$$
(x_i+x_\sigma)^{m+2^e}=x_i^m x_\sigma^{2^e} +\text{\em terms involving powers $x_i^j$ with $j<m$}
$$
for $2\leq i\leq\sigma$. Therefore, ignoring basis elements $x_1^{a_1}\cdots x_\sigma^{a_\sigma}$ having $a_i<m$ for some $i\in\{1,\ldots,\sigma-1\}$, the product of $\sigma$-th zero-divisors under consideration becomes $$(x_1^mx_\sigma^{2^e})(x_2^mx_\sigma^{2^e})\cdots(x_{\sigma-1}^mx_\sigma^{2^e})=x_1^mx_2^m\cdots x_{\sigma-1}^mx_\sigma^{(\sigma-1)2^e},$$ which is an element of the standard basis.
\end{proof}

Corollary~\ref{partilar} below, a direct consequence of Theorem~\ref{determinationofgapsversionpaper}, should be compared with the final assertion in Example~\ref{hopfspaces}.

\begin{corollary}\label{partilar}
If $m$ is even and $s>m$, all inequalities in~(\ref{pisextendida}) are in fact equalities.
\end{corollary}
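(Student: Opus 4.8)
The plan is to observe that the statement is essentially an immediate unwinding of Theorem~\ref{determinationofgapsversionpaper}, once one notices that the two extreme terms of the chain~(\ref{pisextendida}) are $sm$ and $\zcl_s(\RP^m)$; hence it suffices to prove that these two numbers coincide when $m$ is even and $s>m$.

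First I would specialize Theorem~\ref{determinationofgapsversionpaper} to the case $e=0$, which by the theorem's own remark is exactly the case of $m$ even. The theorem then asserts that $G(m,s)\le 2^{0}-1=0$ as soon as $s\ge\max\{(m+1)/2^{0},\,2\}=\max\{m+1,\,2\}$. Since the even integers of interest satisfy $m\ge2$, the constraint on $s$ reads simply $s\ge m+1$, i.e.\ $s>m$ for integers. Thus $G(m,s)\le 0$ for every $s>m$.

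Next, recall that $G(m,s)=sm-\zcl_s(\RP^m)$ is non-negative for every $s$, since it is the difference of the outer terms of~(\ref{pisextendida}). Combining this with the previous paragraph yields $G(m,s)=0$, that is, $\zcl_s(\RP^m)=sm$, whenever $m$ is even and $s>m$. Feeding this back into~(\ref{pisextendida}) squeezes both $\TC_s(\RP^m)$ and $\secat(\pi_s)$ between two copies of $sm$, forcing all four quantities to be equal, which is exactly the assertion that all inequalities in~(\ref{pisextendida}) are equalities. (As an alternative to invoking the full strength of Theorem~\ref{determinationofgapsversionpaper}, one could argue directly at $s=m+1$ using the explicit non-vanishing product $(x_1+x_{m+1})^{m+1}\cdots(x_m+x_{m+1})^{m+1}=x_1^{m}\cdots x_m^{m}x_{m+1}^{m}\ne0$ in $H^*((\RP^m)^{m+1})$, and then propagate equality to all larger $s$ via the monotonicity of the sequence $G(m,\cdot)$ established in Lemma~\ref{Gsequencepaperform}.)

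Since every step is a direct consequence of facts already in hand, there is no genuine obstacle to overcome here; the only point that requires a moment's care is the bookkeeping that matches the hypothesis ``$s>m$'' of the corollary with the quantity $\max\{(m+1)/2^{e},2\}$ appearing in Theorem~\ref{determinationofgapsversionpaper} when $e=0$, together with the elementary observation that $G(m,s)\ge 0$ always. The substantive content of the corollary lives entirely inside Theorem~\ref{determinationofgapsversionpaper}, whose proof (the oddness of $\binom{m+2^{e}}{2^{e}}$ under the stated congruence) has already been carried out.
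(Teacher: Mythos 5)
Your proposal is correct and follows exactly the path the paper intends: the text explicitly presents Corollary~\ref{partilar} as ``a direct consequence of Theorem~\ref{determinationofgapsversionpaper}'', and your argument---specializing to $e=0$, using $G(m,s)\ge 0$ from Lemma~\ref{Gsequencepaperform}, and squeezing the chain~(\ref{pisextendida})---is precisely that unwinding. The parenthetical alternative via the explicit product $(x_1+x_{m+1})^{m+1}\cdots(x_m+x_{m+1})^{m+1}=x_1^m\cdots x_m^m x_{m+1}^m$ together with the monotonicity of $G(m,\cdot)$ is likewise just a restatement of the proof of the theorem in the case $e=0$, so there is no genuinely different route here.
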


The hypothesis $s>m$ can substantially be relaxed in many cases. For instance,~\cite[Theorem~1.2]{GGGL} implies that the conclusion in Corollary~\ref{partilar} remains true for all $s\geq3$ if $m$ is a 2-power. Other concrete instances follow from Propositions~4.2, 4.7 and 4.9--4.12 in~\cite{CGG}.

\section{Examples with $\TC_s(\RP^m)=\secat(\pi_s\colon\mathrm{P}_{\mathbf{m}_s}\to(\RP^m)^{\times s})$}
In this brief closing section we summarize our knowledge of examples where~(\ref{pis}) is either an equality, or holds within one from being so. On the other hand, we are not aware of any case where~(\ref{pis}) actually fails to be an equality.

\medskip
Since $\TC_s(\RP^1)=s-1$ (\cite[Corollary~3.12]{bgrt}),~(\ref{suertota}) and~(\ref{pisextendida}) force~(\ref{pis}) to be an equality for $m=1$. In slightly more general terms, and as indicated in Example~\ref{hopfspaces}, equality in~(\ref{pis}) holds for  $m\in\{1,3,7\}$. It would be interesting to give an explicit construction of the corresponding (forced) $G_s$-maps $\phi_s\colon (S^m)^{\times s}\to J_{s-1}(G_s)$ satisfying~(\ref{condadd}). For instance, when $s=2$ and $m=1$, so that $J_{s-1}(G_s)=S^1$, the required map $\phi_2$ can be defined by multiplication of complex numbers. 

\medskip
In the previous section we have discussed how Theorem~\ref{determinationofgapsversionpaper} provides instances with equality in~(\ref{pis}) when $m$ is even. We now remark that the same arguments show that, in any case,~(\ref{pis}) fails from being an equality by at most a unit provided $m\equiv1\bmod4$ and $s\geq\frac{m+1}{2}$ (as in the case of $m$ even, the restriction imposed by the last inequality can usually be relax substantially).


\begin{thebibliography}{10}

\bibitem{MR0336757}
J.~Adem, S.~Gitler, and I.~M. James.
\newblock On axial maps of a certain type.
\newblock {\em Bol. Soc. Mat. Mexicana (2)}, 17:59--62, 1972.

\bibitem{bgrt}
Ibai Basabe, Jes{\'u}s Gonz{\'a}lez, Yuli~B. Rudyak, and Dai Tamaki.
\newblock Higher topological complexity and its symmetrization.
\newblock {\em Algebr. Geom. Topol.}, 14(4):2103--2124, 2014.

\bibitem{CGG}
Natalia Cadavid-Aguilar, Jes\'us Gonz\'alez, and Aldo Guzm\'an~S\'aenz.
\newblock {\em The stability of the higher topological complexity of real
  projective spaces: an approach to their immersion dimension}.
\newblock arXiv:1609.07565. Submitted.

\bibitem{MR2651360}
Donald~M. Davis.
\newblock Projective product spaces.
\newblock {\em J. Topol.}, 3(2):265--279, 2010.

\bibitem{Far}
Michael Farber.
\newblock Topological complexity of motion planning.
\newblock {\em Discrete Comput. Geom.}, 29(2):211--221, 2003.

\bibitem{MR2074919}
Michael Farber.
\newblock Instabilities of robot motion.
\newblock {\em Topology Appl.}, 140(2-3):245--266, 2004.

\bibitem{MR1988783}
Michael Farber, Serge Tabachnikov, and Sergey Yuzvinsky.
\newblock Topological robotics: motion planning in projective spaces.
\newblock {\em Int. Math. Res. Not.}, (34):1853--1870, 2003.

\bibitem{GGGL}
Jes\'us Gonz\'alez, B\'arbara Guti\'errez, Darwin Guti\'errez, and Adriana
  Lara.
\newblock {\em Motion planning in real flag manifolds}.
\newblock arXiv:1509.02898v2. Accepted for publication in \emph{Homology,
  Homotopy and Applications}.

\bibitem{MR3020869}
Gregory Lupton and J{\'e}r{\^o}me Scherer.
\newblock Topological complexity of {$H$}-spaces.
\newblock {\em Proc. Amer. Math. Soc.}, 141(5):1827--1838, 2013.

\bibitem{Ru10}
Yuli~B. Rudyak.
\newblock On higher analogs of topological complexity.
\newblock {\em Topology Appl.}, 157(5):916--920, 2010.

\bibitem{Schwarz66}
A~Schwarz.
\newblock The genus of a fiber space.
\newblock {\em Amer.~Math.~Soc.~Transl.~(2)}, 55:49--140, 1966.

\end{thebibliography}

\bigskip
{\sc Departamento de Matem\'aticas

Centro de Investigaci\'on y de Estudios Avanzados del Instituto Polit\'ecnico Nacional

Av.~IPN 2508, Zacatenco, M\'exico City 07000, M\'exico

{\tt jesus@math.cinvestav.mx}

\bigskip
Departamento de Formaci\'on B\'asica

Escuela Superior de C\'omputo del Instituto Polit\'ecnico Nacional

Juan de Dios B\'atiz esq.~Miguel Oth\'on de Mendiz\'abal, M\'exico City 07738, Mexico.

{\tt dargut@hotmail.com}

\bigskip
Departamento de Matem\'aticas

Escuela Superior de F\'isica y Matem\'aticas del Instituto Polit\'ecnico Nacional

Edificio 9, U.P.~Adolfo L\'opez Mateos, Mexico City 07300, Mexico.
               
{\tt adriana@esfm.ipn.mx}
}

\end{document}